\theoremstyle{plain}
\newtheorem{theorem}{Theorem}[section]
\newtheorem{lemma}[theorem]{Lemma}                           
\newtheorem{proposition}[theorem]{Proposition}
\newtheorem*{remark*}{Remark}
\newtheorem*{remarks*}{Remarks}
\newtheorem{remark}[theorem]{Remark}
\newtheorem{example}[theorem]{Example}
\newtheorem*{example*}{Example}
\newtheorem*{examples*}{Examples}
\newtheorem*{definition*}{Definition}
\newtheorem{question}[theorem]{Question}
\newtheorem*{claim*}{Claim}
\numberwithin{figure}{section}
\numberwithin{equation}{section}
\newcommand{\proofend}{\hspace*{\fill} $\square$\\}
\def\1{\:\!}
\def\2{\;\!}
\def\Diffc0{\operatorname{Diff^c_0}}
\def\Sympc0{\operatorname{Symp^c_0}}
\def\CC{\mathbb{C}}
\def\QQ{\mathbb{Q}}
\def\RR{\mathbb{R}}
\def\ZZ{\mathbb{Z}}
\def\R{\operatorname{\mathbb{R}}}
\begin{document}

\title{\vspace*{0cm} Boundaries of open symplectic manifolds and the failure of  packing stability}
\author{Dan Cristofaro-Gardiner and Richard Hind}

\date{\today}

\maketitle

\begin{abstract}

A finite volume symplectic manifold is said to have ``packing stability" if the only obstruction to symplectically embedding sufficiently small balls is the volume obstruction.  Packing stability has been shown in a variety of cases and it has been conjectured that it always holds.  We give counterexamples to this conjecture; in fact, we give examples that cannot be fully packed by any domain with smooth boundary nor by any convex domain.  
The examples are symplectomorphic to open and bounded domains in $\mathbb{R}^4$, with the diffeomorphism type of a disc. 

The obstruction to packing stability is closely tied to another old question, which asks to what extent an open symplectic manifold has a well-defined boundary; it follows from our results that many examples cannot be symplectomorphic to the interior of a  compact symplectic manifold with smooth boundary.  Our results can be quantified in terms of the volume decay near the boundary, and we produce, for example, smooth toric domains that are only symplectomorphic to the interior of a compact domain if the boundary of this domain has inner Minkowski dimension arbitrarily close to $4$.  

The growth rate of the subleading asymptotics of the ECH spectrum plays a key role in our arguments.  We prove a very general ``fractal Weyl law", relating this growth rate to the Minkowski dimension; this formula is potentially of independent interest.

\end{abstract}

\section{Introduction}
\subsection{Context}

The existence of symplectic ball packings, that is, symplectic embeddings of disjoint balls, has been understood as a basic measure of symplectic flexibility at least since the work of Gromov, see the discussion in \cite[Sec. 1.1]{biran99}.
This flexibility can be quantified by the {\em packing numbers} $p_k(M, \omega)$ for $k \ge 1$. When $(M, \omega)$ has finite volume we define
$$p_k(M, \omega) := \sup_c \frac{k \mathrm{vol}(\overline{B(c)})}{\mathrm{vol}(M, \omega)}$$
where the supremum is over all $c$ such that there exists a  symplectic embedding $\sqcup \overline{B(c)} \hookrightarrow (M, \omega)$ of $k$ disjoint closed balls.

As explained in \cite[Ch. 6]{schl}, a theorem of McDuff and Polterovich \cite[Rem. 1.5.G]{mp1994} shows that
\begin{equation}\label{asymptote}
\lim_{k \to \infty} p_k(M, \omega) = 1
\end{equation}
for all finite volume symplectic manifolds. 
The volume filling limit is already a strong contrast with the Riemannian case.  For example, for isometric ball embeddings into domains in $\RR^2$, it has been known for ages that the limit of the analogous packing numbers is $\pi / \sqrt{12}$, see \cite{toth} for a rigorous proof; for domains in $\RR^3$, the corresponding limit 
is $\pi / \sqrt{18}$ by celebrated work of Hales \cite{hales}; and, in any dimension, a general upper bound due to Rogers \cite{rogers} shows that the analogue of \eqref{asymptote} never holds.
We say that a connected symplectic manifold has {\it packing stability}
if there exists a (stability number) $K_0$ such that $p_k(M, \omega) =1$ for all $k \ge K_0$.  
This represents perhaps a surprising level of flexibility, but packing stability has now been established for an ever increasing subset of symplectic manifolds. We say that a symplectic manifold $(M, \omega)$ is {\it rational} if $[\omega] \in H^2(M, \QQ) \subset H^2(M, \RR)$. Then the manifolds shown to have packing stability include closed rational 4-manifolds \cite{biran99}, closed rational manifolds of any dimension \cite{busehind11, busehind13}, all closed 4 manifolds \cite{busehindopshtein16}, open ellipsoids \cite{busehind13}, open 4 dimensional polydisks and pseudoballs \cite{busehindopshtein16}, and rational convex toric domains \cite{cgetal}. 

The question of determining which finite volume symplectic manifolds have packing stability was first raised in Biran's seminal work \cite{biran99}; it is featured as Problem 4 in \cite{chls}.
Based on the results above, and the asymptotic estimate \eqref{asymptote}, it was plausible to conjecture that 
the answer is simply that all do. 
Indeed, this is stated as Conjecture 13.2 in the survey article \cite{schlenk17}.

\subsection{Main results}\label{intro-main}

We now explain our main theorems.  In some cases, we can prove more general statements than these, see Remark~\ref{rmk:point}, but the theorems we present in this section get at the heart of our results while prioritizing brevity; we discuss how they fit together in Remark~\ref{rmk:point}.

The first point of the present work
is to give examples of symplectic manifolds which do not have packing stability; our examples are open subsets of $\mathbb{R}^4$. To state our result, recall that a {\it toric domain} $X \subset \RR^4 \equiv \CC^2$ is a subset of the form
$$X = X_{\Omega} := \{(z, w) \, | \, (\pi |z|^2, \pi |w|^2) \in \Omega\}$$
where $\Omega$ is a
subset of $\RR^2_{\ge 0}$. 
In this paper, we will be interested in 
toric domains $X_f := X_{\Omega}$, where $\Omega$ is the 
closed 
subset of the first quadrant bounded by the axes and a convex function $f:[0,\infty) \to (0,b]$ with $\int f(r) \, dr < \infty$; we call such a domain an {\em unbounded concave toric domain}.  We note that the function $f$ in the theorem does not have to be differentiable.

\begin{theorem} \label{thm:main}

Let $X_f$ be an unbounded concave toric domain.  Then 
$int(X_f)$ does not have packing stability. In fact, $p_k(int(X_f)) <1$ for all $k$.

\end{theorem}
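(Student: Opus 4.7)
The plan is to use monotonicity of ECH capacities together with subleading asymptotics of the ECH spectrum of $\text{int}(X_f)$ to obstruct ball packings at the volume-filling ratio. First extend ECH capacities to the open manifold by exhaustion, $c_N(\text{int}(X_f)) := \sup\{c_N(K) : K \subset \text{int}(X_f) \text{ compact}\}$. An exhaustion by bounded concave toric sub-domains $X_f^R$ with volumes $V_R \nearrow V$, together with the Cristofaro--Gardiner--Hutchings--Ramos volume asymptotic applied to each $X_f^R$, then yields $c_N(\text{int}(X_f))^2/N \to 4V$.

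On the ball side, the disjoint-union formula gives $c_N(\sqcup_k B(c)) = c\cdot \max\{\sum_i d(n_i) : \sum_i n_i = N\}$, where $d(n) := c_n(B(1))$ satisfies $d(n) \geq \sqrt{2n} - O(1)$ by a direct count of lattice pairs $(a,b)$ with $a + b \leq d$. Taking the $n_i$ roughly equal to $N/k$ yields the lower bound $c_N(\sqcup_k B(c)) \geq c\bigl(\sqrt{2kN} - Ck\bigr)$ for some absolute $C$. ECH monotonicity then says that any embedding $\sqcup_k B(c) \hookrightarrow \text{int}(X_f)$ forces $c \cdot c_N(\sqcup_k B(1)) \leq c_N(\text{int}(X_f))$ for every $N$; at the volume-filling value $c = \sqrt{2V/k}$ this rearranges to
\[
2\sqrt{VN} - c_N(\text{int}(X_f)) \;\leq\; C\sqrt{2Vk}
\]
for all $N$. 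To conclude $p_k(\text{int}(X_f)) < 1$, it therefore suffices to exhibit, for every $k$, some $N$ at which this ``capacity deficit'' on the left strictly exceeds the fixed constant $C\sqrt{2Vk}$.

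The crux is a subleading estimate showing that the deficit $2\sqrt{VN} - c_N(\text{int}(X_f))$ is unbounded as $N \to \infty$. The intuition is that since $f$ lives on the unbounded half-line, a definite fraction of $V$ sits in an infinite cusp of $\Omega$ whose Reeb orbits carry large action; the short-action capacity $c_N$ simply cannot ``see'' this volume until $N$ is very large. To make this precise, I would use the combinatorial description of ECH capacities of bounded concave toric domains in terms of weight decompositions into ellipsoids, apply it to each truncation $X_f^R$, and optimize the choice of $R = R(N)$ to produce, for each $N$, an upper bound on $c_N(\text{int}(X_f))$ that falls below $2\sqrt{VN}$ by an amount growing with $N$.

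The main obstacle is precisely this quantitative deficit bound. The natural tension is between truncating far out (large $R$: capturing most of the volume but shifting the ECH capacities upward via longer weights) and truncating close to the origin (small $R$: keeping good subleading control but discarding too much volume to read off the correct leading coefficient $2\sqrt{VN}$). Calibrating this trade-off --- driven by the specific decay behaviour of $f$ at infinity --- is essentially the content of the ``fractal Weyl law'' advertised in the abstract, and I expect its proof to constitute the technical heart of the argument.
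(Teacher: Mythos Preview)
Your overall architecture is right and matches the paper: compare subleading asymptotics, using that the ball packing has bounded deficit while $e_N(\text{int}(X_f)) \to -\infty$, then invoke monotonicity. On the ball side the paper takes a slightly slicker route, using McDuff's identity $c_N(\sqcup_k B(c)) = c_N(\lambda E(1,k))$ together with the known fact that $e_N(E(1,k))$ is bounded below; your direct estimate via the disjoint-union formula gives the same conclusion.

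Where you diverge from the paper, and where there is a genuine gap, is the crux. Your proposed ``truncate at $R$ and optimize $R=R(N)$'' scheme is aimed in the wrong direction: the truncations $X_f^R$ sit \emph{inside} $\text{int}(X_f)$, so they give lower bounds on $c_N(\text{int}(X_f))$, not the upper bounds you need; and taking the supremum over $R$ simply recovers $c_N(\text{int}(X_f))$ itself, so there is no trade-off to optimize. The paper instead proves directly that $c_N(\text{int}(X_f)) = c_N\bigl(\sqcup_{i\ge 1} B(a_i)\bigr)$ for the full infinite weight sequence $(a_i)$ of $X_f$, and then bounds the right-hand side from above. Writing the optimizer as $(d_i)$ with $\sum (d_i^2+d_i)=2N$, Cauchy--Schwarz gives
\[
e_N(\text{int}(X_f)) \;\le\; -\sum_i a_i\bigl(\sqrt{d_i^2+d_i}-d_i\bigr) \;\le\; -(\sqrt{2}-1)\sum_{i<I(N)} a_i,
\]
where $I(N)$ is the first index with $d_i=0$. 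A swapping argument on the optimizer forces $I(N)\to\infty$, and since unboundedness of the domain is equivalent to $\sum a_i=\infty$, the right-hand side diverges.

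Finally, the fractal Weyl law is not what drives this step. That result goes the other way: it gives \emph{lower} bounds on $e_N$ in terms of the inner Minkowski dimension of the boundary, and is used for Theorems~\ref{main3} and~\ref{main2} to show that domains with tame boundary cannot fully pack $X_f$. For Theorem~\ref{thm:main} only the upper bound $e_N(\text{int}(X_f))\to -\infty$ is needed.
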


Thus, the $X_f$ can not be fully filled, and so one would like to know what they look like.  The $X_f$ can be assumed to have smooth boundary, but since they are not compact this hides the behavior at infinity; we show below that they are symplectomorphic to interiors of compact domains, and it is more natural to study the boundary from this point of view.  
Then understanding the nature of the closure of the image of $X_f$ 
connects to 
a classic problem which goes back to \cite{eh}, of interest  beyond the context of packing stability: to what extent does an open symplectic manifold have a well-defined boundary, and what symplectic properties of the manifold are determined by the boundary?

In fact, we can give sharp results about the following facet of this.
In smooth topology, it is a fundamental problem to determine whether a given open manifold can be smoothly embedded as the interior of a smooth compact manifold with boundary: this is called the problem of ``finding a boundary", see for example \cite{bll}.  It is natural to ask the analogous question in the symplectic setting; prior to the present work, to our knowledge no symplectic obstructions to this problem have been known.  Returning now to our domains $X_f$, 
say that a function $f: [0,\infty) \to [0,b]$ has {\em asymptotic decay rate} $p \in \mathbb{R}$ if 
\[\lim_{x \to \infty} \frac{\ln f(x)}{\ln x} = -p.\]
For example, $f(x) = (1+x)^{-p}$ has asymptotic decay rate $p$: we denote the unbounded concave toric domain defined by this particular $f(x)$ by $X_p$.  We often call the asymptotic decay rate the {\em decay rate} for brevity.

\begin{theorem}
\label{main3}
Assume that $f$ has decay rate $1 < p < 2$, and assume that
$int(X_f)$ is symplectomorphic to
a relatively compact subset $Z$ of a symplectic manifold.
Then the inner Minkowski dimension of $\partial Z$ must be at least $2 + 2/p$.  In particular, $int(X_f)$ is not
symplectomorphic to the interior of any smooth compact symplectic manifold with piecewise smooth boundary. 
\end{theorem}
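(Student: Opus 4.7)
The natural invariants here are the ECH capacities $c_k$, a monotone sequence of symplectic invariants satisfying the Weyl-type asymptotic $c_k^2 \sim 4k \cdot \mathrm{vol}$. A symplectomorphism $\mathrm{int}(X_f) \cong Z$ (interpreting $c_k$ for unbounded or relatively compact domains by exhaustion) equates the two ECH spectra, so the strategy is to compare the \emph{subleading} asymptotics of $c_k - 2\sqrt{k \cdot \mathrm{vol}}$: a lower bound on the growth rate for $X_f$, and an upper bound for $Z$ coming from the promised ``fractal Weyl law''.

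For the lower bound on $X_f$, I would use the explicit combinatorial formula for ECH capacities of (limits of) concave toric domains in terms of lattice-point data of $\Omega$, and show that the tail $f(x) \sim x^{-p}$ with $p \in (1,2)$ produces a subleading term $c_k - 2\sqrt{k \cdot \mathrm{vol}(X_f)}$ growing like $k^{\sigma(p)}$ for an explicit exponent $\sigma(p)$, calibrated precisely to match a boundary of Minkowski dimension $2 + 2/p$ under the fractal Weyl law. The key point is that slow decay of $f$ at infinity forces a large boundary correction because the ECH generators accumulating near the tail are spaced ever more widely.

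For the upper bound I would prove the fractal Weyl law: for any relatively compact $Z$ inside an ambient symplectic manifold, if the inner Minkowski dimension of $\partial Z$ is less than $d$, then $c_k(Z) - 2\sqrt{k \cdot \mathrm{vol}(Z)}$ is bounded by a power of $k$ whose exponent depends only on $d$. The natural argument is a quantitative packing: at scale $\varepsilon$, pack $Z \setminus N_\varepsilon(\partial Z)$ by Darboux balls with ECH capacities controlled by sub-additivity under disjoint embeddings, then bound the leftover boundary tube $N_\varepsilon(\partial Z)$ using its Minkowski content, and finally optimize $\varepsilon$ against $k$. Combining: if $\dim_M \partial Z < 2 + 2/p$, then the fractal Weyl law gives a subleading upper bound for $c_k(Z)$ strictly smaller than the lower bound for $c_k(X_f)$ from Step 1, contradicting $c_k(Z) = c_k(\mathrm{int}(X_f))$. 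The last sentence of the theorem is then immediate: a compact symplectic manifold with piecewise smooth boundary has inner Minkowski dimension exactly $3$, and $p < 2$ gives $2 + 2/p > 3$.

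The main obstacle is the second step — proving the fractal Weyl law at the generality claimed, for arbitrary relatively compact $Z$ in an arbitrary symplectic manifold. The local nature of Darboux charts must be reconciled with a Euclidean notion (Minkowski dimension) on $\partial Z$, and the packing-plus-sub-additivity bound must be tight enough to match the exponent $\sigma(p)$ computed on the toric side. Step~1 is more explicit but still requires care to isolate the correct leading contribution to the subleading term when $f$ is only asymptotically, not exactly, a power law.
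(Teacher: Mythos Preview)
Your overall architecture is exactly the paper's: compare the subleading asymptotics $e_k = c_k - 2\sqrt{k\,\mathrm{vol}}$ on the two sides, with a combinatorial estimate for $X_f$ via its weight expansion and a fractal Weyl law for $Z$ via a packing argument, then match exponents. The paper shows $e_k(X_f) \le -C\,k^{1/(2p)-\epsilon}$ and $e_k(Z) \ge -C\,k^{(2-q)/4}$ where $q$ is the volume-decay order ($\dim_M\partial Z = 4-q$); if $\dim_M\partial Z < 2+2/p$ these force $e_k(Z) > e_k(X_f)$ for large $k$, contradicting equality of ECH spectra. Your final sentence on piecewise smooth boundaries is correct.

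However, you have the direction of the comparison reversed. The fractal Weyl law is a \emph{lower} bound on $e_k(Z)$ (equivalently a lower bound on $c_k(Z)$), not an upper bound: a packing of $Z$ by nice pieces, together with monotonicity and the Disjoint Union axiom, can only ever give $c_k(Z)\ge(\text{something})$. Conversely the toric side yields an \emph{upper} bound on $e_k(X_f)$, and this does not come for free from the ball packing $\sqcup B(a_i)\hookrightarrow X_f$; one needs the identity $c_k(X_f)=c_k(\sqcup B(a_i))$ and then a separate optimization argument (Cauchy--Schwarz against the constraint $\sum(d_i^2+d_i)\le 2k$) to show $e_k$ is very negative. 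So the contradiction is $c_k(Z)>c_k(X_f)$, not the inequality you wrote. A smaller point: your packing sketch (``pack $Z\setminus N_\varepsilon(\partial Z)$, optimize $\varepsilon$'') is the right flavor, but the paper uses a dyadic cube decomposition at all scales simultaneously with polydiscs (for which $e_k(P(a,a))\ge -2a$ is elementary) rather than a single scale with balls; the passage to an arbitrary ambient manifold is done, as you anticipated, by finitely many Darboux charts whose overlaps contribute only piecewise-smooth, hence $O(d)$, boundary.
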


We recall the Minkowski dimension in section \ref{mink}.   The condition $p > 1$ is required for the volume to be finite, and in fact the above theorem is actually optimal, as the following proposition shows.

\begin{proposition}
\label{prop:minkowski}
For any $p > 1$, the manifold $int(X_p)$ is symplectomorphic to the interior of a compact set in $\CC^2$ with boundary of inner Minkowski dimension $\mathrm{max}(2+2/p, \, 3)$.
\end{proposition}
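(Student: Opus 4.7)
The plan is to construct explicitly a relatively compact open subset $U\subset \CC^2$ symplectomorphic to $int(X_p)$, by packing $int(X_p)$ into a shrinking neighborhood of a point via an infinite family of progressively finer subdivisions.

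I would first decompose $X_p$ into a bounded central piece $X_p \cap \{\pi|z|^2 \leq R\}$ together with dyadic shells $S_k := X_p \cap \{2^k R \leq \pi|z|^2 \leq 2^{k+1} R\}$ for $k \geq 0$, each of symplectic volume $\sim 2^{k(1-p)}$. I would then subdivide each $S_k$ into $M_k := \lceil 2^{k(p+1)}\rceil$ angular sectors via radial cuts $\arg(z) = 2\pi j/M_k$. By Moser's lemma, each annular $z$-sector (simply connected, of symplectic area $\sim 2^{-kp}$) is symplectomorphic to a disk of the same area; since the $w$-fiber area is also $\sim 2^{-kp}$ by this choice of $M_k$ (up to a bounded factor, removable by passing to $(1+\delta)$-adic shells instead of dyadic), each sector is approximately symplectomorphic to a square polydisk $P(s_k,s_k)$ with $s_k := 2^{-kp}$.

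Next, by standard symplectic folding, $P(s_k, s_k)$ embeds symplectically into a ball $B_k\subset\CC^2$ of Euclidean radius $\rho_k \sim 2^{-kp/2}$ (with only a bounded multiplicative loss in volume). I place the $M_k$ disjoint copies of these balls (one per sector) inside a shrinking neighborhood of a fixed basepoint in $\CC^2$: the $M_k$ scale-$k$ balls fit into a region of diameter $\sim \rho_k M_k^{1/4} = 2^{k(1-p)/4}$, and these scale-wise diameters are summable for $p>1$. Adjacent sectors sharing a radial cut are then joined by thin symplectic necks, producing a single connected open set $U \subset \CC^2$ symplectomorphic to $int(X_p)$.

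Finally, up to subdominant contributions from the necks and the single accumulation point, $\partial\overline{U}$ is the disjoint union of the $3$-spheres $\partial B_k^j$. A direct computation of the inner $\epsilon$-neighborhood volume
\[
V_\epsilon(\partial\overline U) \;\sim\; \sum_{k\geq 0} M_k\,\min(\rho_k^3\,\epsilon,\,\rho_k^4),
\]
obtained by splitting the sum at the critical scale where $\rho_k \sim \epsilon$ and summing geometrically in $k$, gives $V_\epsilon\sim \epsilon^{\min(1,\,2(p-1)/p)} = \epsilon^{4-\max(2+2/p,\,3)}$, after handling the cases $p>2$, $p=2$, $p<2$ separately. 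Hence the inner Minkowski dimension of $\partial\overline U$ is exactly $\max(2+2/p,\,3)$, as claimed. The main obstacle is the symplectic gluing across the $3$-dimensional radial cut hypersurfaces: one must fit the disjoint sector images together into a connected $U$ symplectomorphic to $int(X_p)$ while keeping the necks thin enough to stay subdominant in the Minkowski count. This I would handle by performing the construction over an open cover in place of a closed partition and invoking standard symplectic neighborhood theorems along the cut hypersurfaces.
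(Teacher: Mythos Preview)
Your Minkowski dimension count is correct, but the construction has a genuine gap at the gluing step. You decompose $int(X_p)$ into sectors $P_j^k$ and map each one independently (Moser, then folding, then placement) to a ball $B_j^k$. These maps are chosen with no coordination: near a shared radial cut the two adjacent sectors are sent to balls sitting at unrelated locations in $\CC^2$, by symplectomorphisms that do not agree along the cut. Adding thin necks between the $B_j^k$ produces a connected open set, but it does not produce a symplectomorphism from $int(X_p)$ onto that set --- you would need the sector maps to extend smoothly and symplectically across every cut, and ``standard symplectic neighborhood theorems'' do not manufacture that agreement out of independently chosen embeddings. Passing to an open cover only makes this worse: now the maps must literally coincide on overlaps. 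There are secondary issues as well (the sectors are not polydisks, since the $w$-fiber area varies by a factor $2^p$ across each shell; and you have not verified $U = int(\overline U)$ once necks and the accumulation point are present), but the gluing is the essential obstruction.

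The paper sidesteps this entirely by constructing a \emph{single} global embedding $F = \Xi \circ \Psi \circ \Phi$, an adaptation of symplectic folding. One first straightens the $z$-plane to a half-strip and squares off the $w$-fibers ($\Phi$); then applies an explicit Hamiltonian shear so that fibers over $z$-points with $|x_1 - x_2| \ge 1$ become disjoint ($\Psi$); then wraps both the resulting $z$-strip and $w$-strip into bounded disks ($\Xi$). The maps $\Psi$ and $\Xi$ have uniformly bounded derivatives, so the inner Minkowski dimension of $F(X_p)$ equals that of $\Phi(X_p)$, which is an explicit fibration over a strip with square fibers $S(f(x))$ and is computed directly. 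No cutting or reassembly is needed.
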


Note the 
relation with the seemingly mysterious condition $p < 2$: when $p > 2$, $int(X_f)$ is symplectomorphic to a bounded domain with inner Minkowski dimension $3$, and thus the above $2 + 2/p$ bound is no longer optimal.  We note that the Minkowski dimension is not in general a symplectic invariant: the domain $int(X_p)$ is also symplectomorphic to bounded sets of inner Minkowski dimension greater than $2 + 2p$.

The domains from Theorem~\ref{main3} give further examples of interesting packing stability phenomena, and this is the content of our final result.  To elaborate, let us now turn to packings by more general domains.  For example, the aforementioned Problem 4 in \cite{chls} asks for an investigation of packings by convex domains.  To our knowledge, not many results were previously known about packings by domains that are not symplectomorphic to toric domains, but our techniques apply well in this case. To state one result, let $(Z,\omega_1)$ be a symplectic manifold and define the {\em generalized packing number}
$$p_{Z,k}(M, \omega) := \sup_c \frac{k \mathrm{vol}(Z,c \cdot \omega_1)}{\mathrm{vol}(M, \omega),}$$
where the supremum is over all $c$ such that there exists a  symplectic embedding of $k$ disjoint copies of $(Z,c \cdot \omega_1)$.  
\begin{theorem}
\label{main2}
Let $(Z,\omega_1)$ be a symplectic manifold.  Assume that either:
\begin{enumerate}
\item $Z$ is compact with smooth boundary.
\item $Z$ is convex in $\CC^2$.
\end{enumerate}
Assume in addition that $f$ has asymptotic decay rate $1 < p < 2$.  Then $p_{Z,k}(int(X_f)) < 1$ for all $k$.
\end{theorem}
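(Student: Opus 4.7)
The strategy is to upgrade the ECH-spectrum / fractal Weyl argument behind Theorem~\ref{main3} to the multi-copy setting by using monotonicity of the ECH spectrum under disjoint unions in place of monotonicity under a single embedding.

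Suppose for contradiction that $p_{Z,k}(int(X_f)) = 1$ for some $k$. Then there is a sequence of scalings $c_n$ such that $k$ disjoint copies of $(Z,c_n\omega_1)$ embed symplectically into $int(X_f)$ and $kc_n^2\vol(Z,\omega_1) \to \vol(X_f)$, so $c_n \to c_\infty := \sqrt{\vol(X_f)/(k\vol(Z,\omega_1))}$. By monotonicity of the ECH spectrum together with Hutchings' disjoint union formula, each such embedding yields
\[ c_{km}^{\ECH}(X_f) \;\geq\; k\,c_m^{\ECH}(Z, c_n\omega_1) \;=\; kc_n\cdot c_m^{\ECH}(Z,\omega_1) \text{ for every } m \geq 0. \]
Since $c_m^{\ECH}(X_f)$ is independent of the packing and the right-hand side is continuous in the scaling, I pass to the limit $n\to\infty$ and obtain the same inequality with $c_n$ replaced by $c_\infty$.

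Next I exploit the leading-order volume asymptotic $c_m^{\ECH}(Y)^2/m \to 4\vol(Y)$ of Cristofaro-Gardiner--Hutchings--Ramos. The particular value $c_\infty = \sqrt{\vol(X_f)/(k\vol(Z,\omega_1))}$ is chosen precisely so that $2kc_\infty\sqrt{m\vol(Z,\omega_1)} = 2\sqrt{km\vol(X_f)}$, i.e.\ so that the leading terms of both sides of the inequality cancel identically. Writing $e_m(Y) := 2\sqrt{m\vol(Y)} - c_m^{\ECH}(Y)$ for the subleading defect, what survives is the subleading inequality
\[ e_{km}(X_f) \;\leq\; kc_\infty\cdot e_m(Z,\omega_1) \text{ for every } m. \]

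Finally I apply the fractal Weyl law in two different directions. Under either hypothesis on $Z$, the boundary $\partial Z$ has inner Minkowski dimension $3$: this is immediate when $\partial Z$ is smooth, and follows in the convex case from the classical fact that boundaries of convex bodies in $\CC^2$ are Lipschitz. The fractal Weyl law of this paper then provides an upper bound $e_m(Z,\omega_1) = O(m^{\alpha(3)})$, where $\alpha(d)$ is the growth exponent the fractal Weyl law associates to boundary Minkowski dimension $d$. Conversely, the explicit ECH-spectrum analysis of $X_f$ underlying Theorem~\ref{main3} produces a matching lower bound $e_m(X_f) \geq c'\,m^{\alpha(2+2/p)}$. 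Since $\alpha$ is strictly increasing in $d$ and $p<2$ gives $2+2/p > 3$, we have $\alpha(2+2/p) > \alpha(3)$; substituting both bounds into the subleading inequality and letting $m\to\infty$ yields the desired contradiction.

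\textbf{Main obstacle.} The hard point is the asymmetric use of the fractal Weyl law — an \emph{upper} bound $e_m(Z) = O(m^{\alpha(3)})$ in terms of the Minkowski dimension of $\partial Z$, together with a matching \emph{lower} bound $e_m(X_f) \geq c' m^{\alpha(2+2/p)}$ on the defect of $X_f$ that is strong enough to beat the exponent $\alpha(3)$. For the convex case of $Z$ the upper direction must accommodate merely Lipschitz boundaries, which is where the generality of the paper's fractal Weyl law is essential; the lower bound on $e_m(X_f)$ is essentially the same estimate that drives Theorem~\ref{main3}, and once both directions are in hand the remainder of the argument reduces to a comparison of power laws.
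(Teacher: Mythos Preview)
Your argument is correct and follows essentially the same route as the paper: the paper packages the $k$ copies into a single disconnected domain $Y_k=\sqcup_k Z$, checks via Lemma~\ref{lem:smoothcase} or Lemma~\ref{lem:convexcase} that $\partial Y_k$ has inner Minkowski dimension~$3$, and then invokes Theorem~\ref{thm:general}, whose proof is exactly the subleading-asymptotics comparison (Theorem~\ref{bdrygrowth} versus Proposition~\ref{upperprop}) that you carry out by hand. The only cosmetic differences are that you use the Disjoint Union axiom to reduce to a single copy of $Z$ rather than applying the Weyl law directly to $Y_k$, and you pass to the limiting scaling $c_\infty$ rather than arguing by continuity at the volume-matching scale.
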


\begin{remark}
\label{rmk:super}
\normalfont
Theorem~\ref{thm:main} and Theorem~\ref{main2} have an interesting connection with the phenomenon known as ``long-term superrecurrence": this refers to situations where the images of any Darboux ball under iterates of a Hamiltonian diffeomorphism intersect more often than one would expect solely from volume considerations; for brevity, we refer the reader to the survey \cite{ps} for a fuller explanation. 
 To our knowledge, it had been unknown prior to the current work whether there are any natural symplectic manifolds $X$ for which long-term superrecurrence occurs.  It follows immediately from Theorem~\ref{thm:main} that this pheonmenon occurs whenever $X$ is an unbounded concave toric domain $X_f$.  Theorem~\ref{main2}  allows for a generalization of this, proving  
long-term superrecurence for a large class of open subsets of $X_f$.
\end{remark}

\begin{remark}
\label{rmk:point}
\normalfont
In combination Theorem~\ref{main2} and Theorem~\ref{main3} suggest the following perspective: the $X_f$ have provably complicated boundary, and this obstructs any full filling by a domain with qualitatively simpler boundary.   A more general statement, which implies both of the aforementioned theorems, appears in Theorem~\ref{thm:general}.  We further discuss this perspective in section \ref{mainproofs}.   
\end{remark}

\begin{remark}
\normalfont
As observed by S. Nemirovski \cite{nemerovski}, the domains $X_p$ give examples of the following phenomenon: they are logarithmically convex and hence holomorphically convex, but are not biholomorphic to a bounded domain (as they contain copies of the complex plane $\CC$); on the other hand, in the symplectic category, it follows from our results that they are symplectomorphic to bounded domains but not to convex domains. 
\end{remark}

\subsection{The nature of the obstructions and the fractal Weyl law}

The main idea in proving the above theorems is to study the ``subleading asymptotics" of ECH capacities\footnote{One could alternatively use the subleading asymptotics of the ``elementary ECH capacities" defined in \cite{elem}, and our proofs would hold verbatim for these as well, since they only rely on formal properties that are shared by both sets of capacities.} $c_k$.
These are symplectic capacities, defined for any subset of a symplectic $4$-manifold; we summarize what we need to know about ECH capacities in section \ref{ECHprelim}.  The leading asymptotics of these capacities were studied in \cite{cghr}, with the conclusion (under some hypotheses that are not relevant to our discussion here, but which hold, for example, for any bounded open subset of $\mathbb{R}^4$) that
\[ c_k(X) \approx \sqrt{4 vol(X)} \sqrt{k}.\]
In other words, the $c_k$ grow like $k^{1/2}$, and the coefficient at highest asymptotics measures the volume of the manifold.

Much less is currently understood about the subleading asymptotics
\[ e_k(X) := c_k(X) - 2\sqrt{vol(X) k}.\]
The $e_k$ are the invariants that we use to obstruct packing stability, and another consequence of our work is a better understanding of the $e_k$ themselves. More precisely, 
we can extract well-defined growth rates taking a wide range of values for bounded open connected subsets of $\mathbb{R}^4$, contrasting the case of the leading asymptotics. 
To state our result, fix $2 \ge p > 1$, and recall $X_p$ is the toric domain $X_f$ with $f = (1+x)^{-p}$. 

\begin{proposition} 
\label{prop:sub}
There exist positive constants $C_1$ and $C_2$ so that 
\[ -C_1 k^{1/2p} < e_k(X_p) < -C_2 k^{1/2p}.\]
\end{proposition}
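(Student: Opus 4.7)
The approach is to apply the combinatorial formula of Choi-Cristofaro-Gardiner-Frenkel-Hutchings-Ramos (CCGFHR) which gives $c_k(X_f) = c_k(\sqcup_i B(a_i))$ for any concave toric domain, where $(a_i)_{i \geq 1}$ is the weight sequence obtained by recursively extracting maximal inscribed balls (triangles $\{x + y \leq a\}$) from $\Omega_f$. The right-hand side is the explicit optimization
\[
c_k(\sqcup_i B(a_i)) \;=\; \max_{\sum_i k_i = k} \sum_i a_i \, d(k_i),
\]
where $d(j) = \sqrt{2j} - \tfrac{1}{2} + O(j^{-1/2})$ is the integer satisfying $d(d+1)/2 \leq j < (d+1)(d+2)/2$. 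The CCGFHR volume identity $\sum_i a_i^2/2 = \vol(X_p) = 1/(p-1)$ furnishes the input for the leading asymptotic.

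The key step is to pin down the decay rate of the weights for $f(x) = (1+x)^{-p}$: I claim $a_i \asymp i^{-p/(p+1)}$. The heuristic is that the tail $\Omega_p \cap \{x \geq X\}$ has area $\asymp X^{1-p}/(p-1)$ and is exhausted by triangles of weight $\asymp f(X) \asymp X^{-p}$, so it requires $\asymp X^{p+1}$ triangles; solving $n \asymp X^{p+1}$ and inserting into $a_n \asymp X^{-p}$ yields the claimed decay. With this in hand, the continuous-optimal allocation $k_i^{\ast} \propto k\, a_i^2/\sum_j a_j^2$ is $\geq 1$ precisely for $i \lesssim N := k^{(p+1)/(2p)}$. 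Cauchy-Schwarz on the continuous relaxation reproduces $\sqrt{2k \sum_i a_i^2} = 2\sqrt{k/(p-1)}$, and the $-\tfrac{1}{2}$ correction in $d(j)$ contributes
\[
-\tfrac{1}{2}\sum_{i=1}^N a_i \;\asymp\; \int_1^N i^{-p/(p+1)}\, di \;\asymp\; N^{1/(p+1)} \;\asymp\; k^{1/(2p)},
\]
which is the subleading order predicted by the proposition.

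The two directions of the inequality are then obtained by bounding $d(j)$ above and below by appropriate square-root functions in each summand and evaluating the resulting convex optimizations, while checking that the weights $a_i$ with $i > N$ (for which $k_i^{\ast} < 1$ and so round to $k_i = 0$) contribute at most the same order $k^{1/(2p)}$. The main obstacle is the rigorous proof of $a_i \asymp i^{-p/(p+1)}$ with matching upper and lower constants: since $f(x) = (1+x)^{-p}$ is only asymptotically self-similar (the shift $x \mapsto 1+x$ breaks the exact scaling $\tilde f(\mu^{1/p}x) = \mu^{-1}\tilde f(x)$ enjoyed by $\tilde f(x) = x^{-p}$), the recursive triangle extraction does not close on itself, and a clean two-sided bound on the weights requires either a comparison with a genuinely self-similar reference domain (such as the region under $y = x^{-p}$ on $\{x \geq 1\}$) or an intrinsic estimate for the weight sequence that depends only on the tail exponent $p$. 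Once this is in place, the remaining steps are routine bookkeeping with sums and integrals of $i^{-p/(p+1)}$.
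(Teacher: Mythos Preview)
Your upper bound argument is essentially the paper's own: the formula $c_k(X_p)=c_k(\sqcup_i B(a_i))$, the lower bound $a_i \ge C\, i^{-p/(p+1)}$ (which the paper obtains by solving $f'(x)=-1/i$ for $f(x)=(1+x)^{-p}$, giving $x\asymp i^{1/(p+1)}$ and then $a_i\ge f(x)$), and then a Cauchy--Schwarz estimate on the optimizer to extract a term of order $-\sum_{i<I(k)} a_i \asymp -k^{1/(2p)}$.  So that half is on the same track.

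For the lower bound on $e_k$, however, your route is genuinely different from the paper's, and the obstacle you flag is real.  You propose to stay inside the combinatorial formula, plug in the near-optimal allocation $k_i\approx k\,a_i^2/(2\vol)$, and control both the $-\tfrac12\sum_{i\le N} a_i$ correction and the volume deficit from the tail $i>N$; both of these contributions require the \emph{upper} bound $a_i \le C\, i^{-p/(p+1)}$, which is not proved anywhere in the paper and is not a formal consequence of the decay rate alone.  The paper completely sidesteps this: its lower bound $e_k(X_p) > -C_1 k^{1/(2p)}$ never touches the weight sequence.  Instead it computes the volume decay near $\partial X_p$ directly (for $f(x)=(1+x)^{-p}$ one gets $V_d(X_p) < C\, d^{\,2-2/p}$ by an explicit integral), and then feeds this into Hutchings' cube-packing estimate, which says that $V_d(X)<C d^q$ forces $e_k(X)\ge -C k^{(2-q)/4}$.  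With $q=2-2/p$ this gives exactly $-C k^{1/(2p)}$.

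So your approach would work if you can establish the two-sided weight asymptotic, but the paper's approach is cleaner precisely because it avoids that step: only the weight \emph{lower} bound is ever needed, and the other inequality comes from geometry (volume decay) rather than combinatorics.
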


In view of the above proposition, it is natural to ask 
 to what degree the growth rate of the $e_k$ sees information about a general finite volume domain $U$.  In fact, we can prove the following.
 Define the {\em ECH dimension}
 \[d_{ECH} := 2 + 4 \limsup_{k \to \infty} \left(\frac{\ln(-e'_k(Z))}{\ln(k)}\right)\]
 where $e'_k(Z) = min( e_k(Z), 0)$, and let $dim_M$ denote the inner Minkowski dimension.

\begin{theorem}
\label{thm:fractal}
Let $Z$ be a relatively compact and open subset of a symplectic four-manifold.  Then
\[ d_{ECH}(Z) \le dim_M(\partial Z).\]  
\end{theorem}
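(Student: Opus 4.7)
The plan is to translate the claim $d_{ECH}(Z) \le d$, where $d := dim_M(\partial Z)$, into the quantitative lower bound
\[
c_k(Z) \;\ge\; 2\sqrt{\vol(Z)\,k} \;-\; O_\eta\!\bigl(k^{(d-2)/4+\eta}\bigr), \qquad \forall \eta > 0,
\]
which, by the definition of $d_{ECH}$ in terms of $-e_k'$, is equivalent to the theorem.  To produce such a bound, I would pack $Z$ from the inside by a disjoint union of standard symplectic shapes whose ECH capacities are sharply understood, and then apply the monotonicity and disjoint-union axioms of $c_k$: monotonicity gives $c_k(Z) \ge c_k(\bigsqcup_i X_i)$, while the disjoint-union formula $c_k(\bigsqcup X_i) = \sup_{\sum k_i = k} \sum c_{k_i}(X_i)$ reduces the problem to sharp ECH asymptotics on the individual pieces.

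The interior packing should be of Whitney type.  For a cutoff $t_0 > 0$, I would arrange disjoint symplectic boxes $X_i \subset Z$ of side $s_i$ comparable to $\operatorname{dist}(X_i,\partial Z)$, with $s_i \ge t_0$, filling each such box to density arbitrarily close to $1$ using the packing-stability results already established for open polydisks, ellipsoids, and balls.  The inner Minkowski hypothesis gives, for every $\eta>0$, that $\vol(Z) - \vol(\bigsqcup X_i) \le C_\eta t_0^{4-d-\eta}$, so the leading ECH term is preserved up to $O_\eta(\sqrt{k}\,t_0^{4-d-\eta})$.  For each piece, the sharp asymptotic $c_k(E(a,a)) = a(\sqrt{2k} + O(1))$ (and analogues for polydisks) combined with the partition $k_i$ proportional to $\vol(X_i)$ yields
\[
c_k(\bigsqcup X_i) \;\ge\; 2\sqrt{\vol(\bigsqcup X_i)\,k} \;-\; O\!\Bigl(\sum_i s_i^2\Bigr).
\]

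Next I would control $\sum_i s_i^2$ using the Whitney structure.  The number of pieces of side $\sim t$ is bounded by the volume of the $t$-annulus $\{x \in Z : \operatorname{dist}(x,\partial Z) \sim t\}$ divided by $t^4$, which by the Minkowski bound is $O_\eta(t^{-d-\eta})$.  Each dyadic layer therefore contributes $O_\eta(t^{2-d-\eta})$ to $\sum s_i^2$, and summing over $t \in [t_0, 1]$ gives (for $d > 2$) the estimate $\sum_i s_i^2 = O_\eta(t_0^{-(d-2+\eta)})$.  Choosing $t_0 = k^{-1/4}$ then balances the two error terms at $O_\eta(k^{(d-2+\eta)/4})$; since $\eta > 0$ is arbitrary, this yields $d_{ECH}(Z) \le d$.

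The main obstacle is the quantitative interior packing: simultaneously achieving density arbitrarily close to $1$ in each Whitney piece while keeping the accumulated ECH error at the level $O(\sum s_i^2)$.  Naive single-scale ball packings have bounded density and give a strictly weaker exponent than $(d-2)/4$; overcoming this requires invoking packing stability of open polydisks and ellipsoids, possibly iterating sub-packings into residual gaps, and choosing shapes whose per-piece ECH remainder is $O(s^2)$ rather than larger.  Verifying that the balance of the Minkowski volume defect against the packing error produces exactly the exponent $(d-2)/4$ — and not merely $1/(6-d)$, which a uniform-scale argument would give — is the heart of the proof.
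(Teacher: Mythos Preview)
Your strategy---a Whitney-type dyadic packing, followed by balancing the volume-defect error $\sqrt{k}\,t_0^{\,4-d}$ against the perimeter sum $\sum s_i^2\sim t_0^{\,2-d}$ at $t_0=k^{-1/4}$---is exactly the paper's, and the exponent you obtain matches.  Two points of contrast are worth noting.

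First, the detour through packing stability is unnecessary and is not the real obstacle.  The paper packs directly by polydisks $P(a,a)$ (symplectic cubes), for which the explicit combinatorial bound $e_k(P(a,a))\ge -2a$ is available; dyadic cubes on the lattice $2^{-n}\mathbb{Z}^4$ already tile the interior exactly, so there is no density loss to repair and no need to invoke packing stability of the pieces.  The per-piece remainder is then automatically $O(a_i)=O(s_i^2)$, and Hutchings' lemma (Lemma~4.2 of \cite{h2}) packages the disjoint-union estimate you wrote down.

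Second, the step you do gloss over is the reduction from a general symplectic $4$-manifold to $\mathbb{R}^4$.  The paper handles this by covering $\overline{Z}$ with finitely many compact Darboux balls, refining to sets $S_j$ with disjoint interiors, applying the Euclidean estimate to each $\Delta_j=\operatorname{int}(Z\cap S_j)$, and reassembling via the disjoint-union axiom with $k_j\approx k\cdot\vol(\Delta_j)/\vol(Z)$.  The artificial boundary $\partial S_j$ introduced this way is piecewise smooth, hence contributes only order-$1$ volume decay (Lemma~\ref{lem:smoothcase}), which is harmless since $q\le 1$ anyway.
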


We can think of this as a  ``fractal Weyl law", bounding the Minkowski dimension from below by a symplectic invariant, in analogy with formally similar fractal Weyl laws which abound in other areas of geometry, see e.g. \cite{weylannals, dd, berry, physics, mv} and the references therein.

\subsection{Outline of the paper and the main proofs}  We now explain the structure of the paper.

Section \ref{prelims} reviews what we need to know about the 
volume growth near the boundary of Riemannian manifolds, the Minkowski dimension, and ECH capacities, and collects some key preliminary results.

The next two sections are about ECH capacities.  Theorems \ref{thm:main} and \ref{main2} can be read as obstructions to symplectic embeddings between domains of equal volume, where, by the definition of packing numbers, we allow small rescalings of the domain. Since the ECH capacities of domains of equal volume have the same leading asymptotics, the subleading asymptotics can give obstructions.  Section \ref{lowerbounds} establishes general lower bounds on the subleading asymptotics 
for any symplectic manifold, via the aforementioned fractal Weyl law,
extending and quantifying an argument of Hutchings \cite{h2}. Section \ref{upperbounds} establishes upper bounds for unbounded concave toric domains.

Based on this, we establish our main proofs in section \ref{mainproofs}. Concave toric domains are shown to be symplectomorphic to bounded subsets of $\CC^2$ in section \ref{sec:bound}.  The final section discusses packing stability in view of our new results.

\vspace{0.1in}

{\bf Acknowledgements.} We would like to thank Dusa McDuff, Felix Schlenk and Richard Schwartz for reading a preliminary version of the text and providing very helpful comments.   DCG also thanks the National Science Foundation for their support under agreement DMS-2227372, and RH thanks the Simons Foundation for their support under grant no. 633715.  Key conversations about this project occurred during a visit by RH to the University of Maryland, hosted by the Brin Mathematics Research Center;  we thank the center for their support.  We also received valuable comments from the audience on an earlier version of this work that was presented by DCG at the ``From smooth to $C^0$ symplectic geometry" workshop at CIRM in July, 2023; we are very grateful to the organizers and to CIRM for their hospitality.

\section{Preliminaries}\label{prelims}

\subsection{Volume decay and the Minkowski dimension}\label{mink}

We begin by collecting some facts about the Minkowski dimension.  

Our first order of business in this section involves clarifying some definitions.  Let $U \subset \mathbb{R}^4$ be open, and assume that $\partial U$ is compact.  Recall the {\em Minkowski dimension} of $\partial U$, which can be defined as follows \cite{tao}.  Let $\hat{V_d}(U)$ be the volume of $\lbrace x \in \mathbb{R}^4 \, | \, \mathrm{dist}(x,\partial U)  < d \rbrace$.  We now define the {\em upper Minkowski dimension} $dim_M(\partial U)$ by the formula:
\[ \mathrm{dim}_M(\partial U) = 4 - \liminf_{d \to 0} \frac{ \ln \hat{V_d}(U) }{\ln(d)}.\]
(There is also a lower Minkowski dimension, but this is not relevant to our results.)
The Minkowski dimension depends only on $\partial U$, and we will be interested in a slight refinement that is sensitive to $U$ itself.  Namely, define  $V_d(U)$ to be the volume of $\lbrace x \in U \, | \,\mathrm{dist}(x,\partial U)  < d \rbrace$.  We define the {\em inner Minkowski dimension} in analogy by the formula
\begin{equation}
\label{eqn:inner}
\mathrm{dim}_M(\partial X) = 4 - \liminf_{d \to 0} \frac{ \ln V_d(U) }{\ln(d)}.
\end{equation}
It is immediate from the definition that the inner Minkowski dimension bounds the upper Minkowski dimension from below. It is clear that the inner Minkowski dimension is equivalent to the quantity
\begin{equation}
\label{eqn:decay}
\liminf_{d \to 0} \frac{ \ln V_d(U) }{\ln(d)}.
\end{equation}
which we call the {\em order of the volume decay rate near $\partial U$}.

We will be interested in subsets of a general symplectic manifold $(M,\omega)$ -- this is the natural setting for Theorem \ref{main2}, for example --- so we will want to generalize the above definitions.   This is straightforward as long as $U \subset M$ is contained in a compact set; recall that such a set is called {\em relatively compact}.  Let $g$ be a Riemannian metric on $M$ and $U \subset M$ relatively compact and open; define $V_{d,g}(U)$ by copying the definition in the Euclidean case from above, but replacing the Euclidean metric with the metric $g$.
The following lemma states that the volume decay and thus the inner Minkowski dimension is independent of the choice of metric.

\begin{lemma}\label{metricdep}
Let $Z$ be an open and relatively compact subset of a smooth manifold $M$.  Let $g$ and $g'$ be two Riemannian metrics on $M$.
Then
\[ \liminf_{d \to 0} \frac{ V_{d,g}(U) }{d^q}  > 0\]
if and only if
\[ \liminf_{d \to 0} \frac{ V_{d, g'}(U)}{d^q}  > 0. \]
\end{lemma}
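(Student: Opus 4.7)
The plan is to exploit the fact that on any compact neighborhood of $\overline{Z}$ any two Riemannian metrics are uniformly equivalent; the associated distance-to-boundary functions and volume forms then differ only by multiplicative constants, and such constants cannot affect the vanishing order of $V_{d,g}(Z)$ as $d \to 0$.

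First I would fix a compact set $K \subset M$ with $\overline{Z} \subset \Int(K)$. By compactness of $K$, there exists $C \geq 1$ such that $C^{-1} g \leq g' \leq C g$ as bilinear forms on $TM|_K$; writing $n = \dim M$, this immediately yields $C^{-n/2}\, d\vol_g \leq d\vol_{g'} \leq C^{n/2}\, d\vol_g$ on $K$, and $C^{-1/2} L_g(\gamma) \leq L_{g'}(\gamma) \leq C^{1/2} L_g(\gamma)$ for every rectifiable path $\gamma$ contained in $K$.

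The next step, and the only mildly subtle part of the argument, is to compare the distance-to-boundary functions $\mathrm{dist}_g(\cdot, \partial Z)$ and $\mathrm{dist}_{g'}(\cdot, \partial Z)$, since a priori a nearly distance-realizing path could exit $K$. Setting $\delta_0 := \min\bigl(\mathrm{dist}_g(\overline{Z}, M\setminus K),\, \mathrm{dist}_{g'}(\overline{Z}, M\setminus K)\bigr) > 0$, for any $x \in Z$ with $\mathrm{dist}_g(x, \partial Z) < \delta_0/2$ any path from $x$ of $g$-length less than $\delta_0/2$ must stay inside $K$, so $\mathrm{dist}_g(x, \partial Z)$ is realized by infimizing over paths in $K$, and similarly for $g'$. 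The length comparison from the previous paragraph then gives $C^{-1/2}\, \mathrm{dist}_g(x, \partial Z) \leq \mathrm{dist}_{g'}(x, \partial Z) \leq C^{1/2}\, \mathrm{dist}_g(x, \partial Z)$ for all $x \in Z$ sufficiently close to $\partial Z$ in either metric.

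Combining the distance comparison with the volume-form comparison, for all sufficiently small $d > 0$ I obtain the sandwich
\[ C^{-n/2}\, V_{C^{-1/2} d,\, g}(Z) \;\leq\; V_{d, g'}(Z) \;\leq\; C^{n/2}\, V_{C^{1/2} d,\, g}(Z). \]
From this the claim is immediate: if $V_{d, g}(Z) \geq c\, d^q$ for some $c > 0$ and all small $d$, the left-hand inequality forces $V_{d, g'}(Z) \geq c\, C^{-(n+q)/2}\, d^q$ for all sufficiently small $d$, so $\liminf_{d\to 0} V_{d,g'}(Z)/d^q > 0$; the converse follows by symmetry. The main, albeit minor, obstacle is the distance comparison step, and this is why I would be careful to restrict to $d < \delta_0/2$ before invoking it; the remainder is elementary linear algebra and bookkeeping.
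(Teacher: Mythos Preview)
Your proof is correct and takes essentially the same approach as the paper's: both use that on a compact set containing $\overline{Z}$ the two metrics are uniformly comparable, so that the volume forms and distance-to-boundary functions differ only by bounded multiplicative factors. If anything, your argument is more careful than the paper's, which simply asserts that the ratio of the two distance functions is a smooth positive function on $M\times M$ and does not address the issue of nearly-minimizing paths possibly leaving $K$, which you handle explicitly.
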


\begin{proof}
Let $K$ be a compact set containing $Z$; let $\omega$ and $\omega'$ be the volume forms for $g$ and $g'$ respectively; let $h$ and $h'$ be the corresponding distance functions, mapping $M \times M$ to $\mathbb{R}$.
Then $\omega = f \omega'$ for some smooth positive function $f: M \to \mathbb{R}$ and similarly, $h = p h'$ for some smooth positive function $p: M \times M \to \mathbb{R}$.  The function $f$  (resp. $p$) has strictly positive upper and lower bounds on $K$ (resp. on $K \times K$).  
Since by definition $\lbrace x \in Z | \mathrm{dist}_g (x,\partial Z) < d \rbrace \subset K$, with the same fact true for $g'$, the lemma follows.
\proofend
 \end{proof}

Thus, we can define the inner Minkowski dimension and the volume decay for a bounded open subset of a general manifold $M$ by choosing a metric $g$ on $M$, and copying the definitions in \eqref{eqn:inner} and \eqref{eqn:decay}, replacing the quantity $V_d$ with $V_{d,g}$.

The next lemma proves what we need to know about the volume decay near piecewise smooth sets.  

\begin{lemma}
\label{lem:smoothcase}
Let $X \subset \RR^4$ be a bounded domain whose boundary is 
a finite union of compact subsets of smooth codimension $1$ submanifolds of $\RR^4$. Then there is a constant $C$ with $V_d(X) < Cd$ for all $d$.
\end{lemma}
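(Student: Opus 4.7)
The plan is to reduce the statement to a local computation in a tubular neighborhood of each smooth piece of $\partial X$, and then sum over the finitely many pieces. Write $\partial X = \bigcup_{i=1}^N S_i$, where each $S_i$ is a compact subset of a smooth codimension-$1$ submanifold $\Sigma_i \subset \RR^4$. Since a $d$-neighborhood in $X$ of $\partial X$ is contained in the union of the $d$-neighborhoods of the individual $S_i$, and volume is subadditive, it suffices to show that for each $i$ there is a constant $C_i$ with the volume of $\{x \in \RR^4 : \mathrm{dist}(x, S_i) < d\}$ bounded by $C_i d$ for all sufficiently small $d$; then $C = \sum_i C_i$ works after also absorbing the uniformly bounded contribution from any fixed $d \geq d_0$.

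For the single-piece bound, I would proceed as follows. Fix $i$ and choose a slightly larger compact subset $S_i' \subset \Sigma_i$ containing a neighborhood of $S_i$ in $\Sigma_i$. Since $\Sigma_i$ is a smooth codimension-$1$ submanifold of $\RR^4$ and $S_i'$ is compact, there exists $d_0 > 0$ such that the normal exponential map
\[ \Phi_i : \{(p, v) : p \in S_i',\ v \in N_p\Sigma_i,\ |v| < d_0\} \longrightarrow \RR^4,\qquad \Phi_i(p, v) = p + v, \]
is a smooth diffeomorphism onto its image, with Jacobian bounded above by some constant $J_i$. For $d < d_0$ small enough, every point of $\RR^4$ at distance less than $d$ from $S_i$ lies in the image of $\Phi_i$ restricted to $|v| < d$. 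Therefore
\[ \mathrm{vol}\bigl(\{x : \mathrm{dist}(x, S_i) < d\}\bigr) \;\leq\; J_i \cdot \mathrm{area}(S_i') \cdot (2d) \;=\; C_i d. \]
This is the standard tubular-neighborhood volume estimate and is the technical core of the argument.

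Summing the bounds gives $V_d(X) \leq (\sum_i C_i) d$ for $d < \min_i d_0^{(i)}$. For $d$ above this threshold we use the trivial bound $V_d(X) \leq \mathrm{vol}(X)$, which is finite since $X$ is bounded; by enlarging the constant $C$ if necessary we obtain the inequality $V_d(X) < C d$ for all $d$.

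The main (minor) obstacle is simply being careful that the pieces $S_i$ are not themselves smooth submanifolds but merely compact subsets of such, so one needs to enlarge to $S_i'$ before invoking a tubular neighborhood; this avoids any boundary-of-boundary issues in applying the exponential map, since the estimate only requires an upper bound on volume, not an equality.
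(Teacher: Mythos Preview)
Your proof is correct and follows essentially the same approach as the paper: cover $\partial X$ by finitely many compact pieces of smooth hypersurfaces and use the normal exponential map to bound the $d$-neighborhood of each piece by a set of volume $O(d)$. Your version is somewhat more carefully written (the enlargement to $S_i'$ and the explicit treatment of large $d$), but the underlying argument is identical.
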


\begin{proof} Write the boundary as a union of compact subsets $C_i$ 
of smooth submanifolds $S_i$.  Then points in within $d$ of the boundary of $X$ lie in the image of one of the exponential maps
$${\mathrm exp}_{S_i} : \nu^d S_i|_{C_i} \to \RR^4$$
where ${\mathrm exp}_{S_i}$ is the exponential map of $S_i$ and $\nu^d S_i$ is the (rank $1$) normal disk bundle of radius $d$. As the $S_i$ have codimension $1$ these sets are each 
contained in
sets having volume of order $d$.
\proofend
\end{proof}

There is a similar result when the boundary is convex, even if it is not piecewise smooth.

\begin{lemma}
\label{lem:convexcase}
Let $Z \subset \mathbb{R}^4$ be an open and bounded convex domain.  Then $\partial Z$ has volume decay of order $1$.   
\end{lemma}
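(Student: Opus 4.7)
The plan is to bound $V_d(Z)$ above by a linear function of $d$ using the Minkowski functional associated to the convex body $\overline Z$, which turns the problem into a simple scaling computation. Without loss of generality, translate so that the origin lies in the interior of $Z$, and fix $r > 0$ with $B_r(0) \subset Z$. Define $\mu(x) := \inf\{t > 0 : x \in t\,\overline Z\}$. Standard convex analysis (using that $\overline Z$ is convex, bounded, and contains $0$ in its interior) gives that $\mu$ is sublinear, satisfies $\mu(x) \le |x|/r$, and is therefore globally $(1/r)$-Lipschitz on $\mathbb R^4$; moreover $\mu < 1$ on $Z$, $\mu = 1$ on $\partial Z$, and $\mu > 1$ off $\overline Z$.

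The crux is that the distance to the boundary is controlled by the Minkowski functional: for any $x \in Z$ and any $y \in \partial Z$, the Lipschitz bound yields
\[ |x - y| \;\ge\; r\,|\mu(y) - \mu(x)| \;=\; r\,\bigl(1 - \mu(x)\bigr),\]
and taking the infimum over $y$ gives $d(x,\partial Z) \ge r\,(1 - \mu(x))$. Consequently, every $x \in V_d(Z)$ satisfies $\mu(x) > 1 - d/r$, so $V_d(Z) \subseteq \overline Z \setminus (1 - d/r)\,\overline Z$ whenever $0 < d < r$.

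To finish, I would compute the volume of the right-hand side by scaling. Since $(1-d/r)\overline Z$ is the image of $\overline Z$ under dilation by the factor $1-d/r$, we have $\mathrm{vol}\bigl((1-d/r)\overline Z\bigr) = (1-d/r)^4\,\mathrm{vol}(\overline Z)$, and hence
\[ V_d(Z) \;\le\; \bigl[1 - (1-d/r)^4\bigr]\,\mathrm{vol}(Z) \;\le\; \frac{4\,\mathrm{vol}(Z)}{r}\,d\]
for all $0 < d < r$; the range $d \ge r$ is trivially absorbed into the constant via the uniform bound $V_d(Z) \le \mathrm{vol}(Z)$. Thus $V_d(Z) \le C\,d$ for all $d$ and some $C = C(Z)$, which by \eqref{eqn:decay} says that $\partial Z$ has volume decay of order at least $1$, matching the content of Lemma~\ref{lem:smoothcase}.

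I do not expect any real obstacle; the honest choice is between the radial Minkowski-functional approach sketched above, which produces the explicit constant $4\,\mathrm{vol}(Z)/r$, and a more abstract argument via the coarea formula applied to $x \mapsto d(x,\partial Z)$ together with the classical monotonicity of surface area under nested convex bodies. The Minkowski-functional route seems preferable here because it avoids any appeal to geometric measure theory and relies only on the elementary fact that a convex body is radially star-shaped about any interior ball.
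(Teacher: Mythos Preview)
Your argument is correct and complete for the content actually used in the paper (namely the bound $V_d(Z)\le Cd$, which is what feeds into Proposition~\ref{hest} and Theorem~\ref{bdrygrowth}). The Minkowski-functional approach is clean: the Lipschitz bound on $\mu$ is immediate from sublinearity and $B_r\subset\overline Z$, the inclusion $\{x\in Z:\operatorname{dist}(x,\partial Z)<d\}\subset\overline Z\setminus(1-d/r)\overline Z$ follows, and the volume of the shell is computed by pure scaling. Your explicit constant $4\,\mathrm{vol}(Z)/r$ is a nice bonus.

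The paper takes a different route: it applies Steiner's formula to the inner parallel body $Z(d)=\{x\in Z:\operatorname{dist}(x,\partial Z)\ge d\}$, writing $V_d(Z)=N_d(Z(d))-N_0(Z(d))$ as a polynomial in $d$ with quermassintegral coefficients, and then invokes Hausdorff continuity and positivity of the quermassintegrals. This is a heavier external input from convex geometry, whereas your proof is entirely self-contained and uses only that a convex body is star-shaped about an interior ball. Conversely, the Steiner-formula picture makes the polynomial structure of $d\mapsto V_d$ transparent and, because the quermassintegrals are positive, in principle also yields the matching lower bound $V_d\ge c\,d$ that pins the order at exactly $1$; your argument as written gives only the inequality $q\ge 1$. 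That said, the lower bound is not used anywhere downstream, so this is a cosmetic rather than substantive difference.
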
 

\begin{proof}  The proof is a simple application of Steiner's formula, which says in the case relevant here that for any convex body $Z \subset \mathbb{R}^4$, the volume $N_r(Z)$ 
is given by the polynomial
\[ N_r(Z) = \sum^4_{i=0} Q_i(Z)r^i,\]
where the $Q_i(Z)$ are the ``quermassintegrals".
Now consider $Z(d) = \lbrace z \in Z \, | \, \mathrm{dist}(x,\partial Z)  \ge d \rbrace$.  This is a convex domain, and since $Z$ can be alternatively described as the radius $d$ tube around $Z(d)$, the volume of $\lbrace z \in Z \, | \, \mathrm{dist}(x,\partial Z)  < d \rbrace$ is given by
\[  N_d(Z(d)) - N_0(Z(d)).\]
It is well-known that the quermassintegrals are Hausdorff continuous and positive, thus the lemma follows.
\proofend
\end{proof}

\subsection{ECH capacities and concave toric domains}\label{ECHprelim}

The proof of Theorem~\ref{thm:main} uses the subleading asymptotics of the ECH capacities $c_k$, applied to a particular limit of concave toric domains.  In this section, we briefly recall what we need to know about ECH capacities and concave toric domains.

The ECH capacities are a sequence of nonnegative real numbers $c_k(M,\omega)$ associated to four-dimensional symplectic manifolds; they were defined in \cite{h1}.   One can think of ECH capacities as measurements of symplectic size.   A key axiom that they satisfy is the Monotonicity Axiom, which states that each $c_k$ is monotone under symplectic embeddings.  We also need to recall the Ball Axiom, which states that $c_k(B(a))$ is the $(k+1)^{st}$ smallest entry in the matrix $(ma+na)_{ (m,n) \in \mathbb{Z}_{\ge 0} \times \mathbb{Z}_{\ge 0}}$, and the Disjoint Union axiom, which states that $c_k(X \sqcup Y) = sup_{i+j = k} (c_i(X) + c_j(Y))$ for compact four-dimensional Liouville domains $X$ and $Y$.  The definition of ECH capacities arises from the ``embedded contact homology" of closed three-manifolds with contact forms, and ECH capacities are  most naturally defined for compact four-dimensional Liouville domains.  However, in this paper we will need to consider slightly more wild objects.  What we require for our purposes is that, as observed by Hutchings, the definition of ECH capacities extends to any open subset $M$ of a four-dimensional symplectic manifold by defining 
\[  c_k(M) := \sup_{Z} \hspace{1 mm} c_k(Z),\]
where $Z$ is a compact four-dimensional Liouville domain in $M$, with many of the same properties holding.  For more, we refer the reader to \cite{h1}.

Another important axiom about ECH capacities is the Volume Axiom, which states that for compact Liouville domains the $c_k$ recover the Volume in their limit; see \cite{cghr} for the precise formula.  Our obstruction to packing stability will come from the subleading asymptotics, calculated for the domains in Theorem~\ref{thm:main}.   These domains are limits of concave toric domains and we now recall what we need to know about such domains from \cite{ccghr}.  A {\em concave toric domain} is a toric domain $X_{\Omega}$, where $\Omega$ is the subset of the first quadrant bounded by the axes and a convex function $f:[0,a] \to [0,b]$ with $f(a) = 0$. (Recall in section \ref{intro-main} we defined an unbounded concave toric domain to be defined by a convex function $[0, \infty) \to (0, b]$.)  For example, a ball is a concave toric domain.  Any concave toric domain $X_{\Omega}$ has a canonical (possibly infinite) packing by balls $B(a_i)$, via the procedure explained in \cite[Sec. 1.3]{ccghr}; we call the $a_i$ the {\em weights} associated to $\Omega$.   To help the reader visualize the decomposition, we note that this packing comes from decomposing the region $\Omega$ into a union of triangles, each of which are equivalent under the action of the affine group to an isoceles right triangle; for more, see \cite[Fig. 1]{ccghr}. 

An important fact about concave toric domains \cite{ccghr} is that the weights determine the ECH capacities, via the formula:
\begin{equation}
\label{eqn:concaveball}
c_k(X_\Omega) = c_k( \sqcup_i B(a_i)).
\end{equation}      
This formula is proved for concave toric domains such that the weight expansion is finite in \cite[Thm. 1.4]{ccghr}, and then \cite[Rem. 1.6]{ccghr} explains how to extend this to arbitrary concave toric domains; only the finite case is needed as an input for this paper.
We will also need the following bound which immediately follows via induction from the definition of the weight sequence; it also follows as a special case of the more involved \cite[Lem. 3.6]{h2}.

\begin{lemma}
\label{lem:bound}

Let $X_{\Omega}$ be a concave toric domain, such that the associated weights satisfy $\sum a_i \le M.$
Then $X_{\Omega} \subset \lbrace  \pi |z_1|^2 \le M \rbrace \subset \mathbb{C}^2.$
\end{lemma}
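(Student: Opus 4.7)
The statement is equivalent, in moment-map coordinates, to showing that the first-coordinate projection of $\Omega$ is contained in $[0,M]$. Writing $\Omega$ as the region bounded by the axes and the graph of a convex function $f:[0,a]\to[0,b]$ with $f(a)=0$, this amounts to the inequality $a\le\sum_i a_i$. The plan is to prove $a\le\sum a_i$ by peeling off weights along the ``right chain'' of the weight-expansion recursion.

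Recall that the first weight $a_1$ is the largest $t$ for which the triangle $T_t=\{x_1,x_2\ge 0,\ x_1+x_2\le t\}$ fits inside $\Omega$. Removing $T_{a_1}$, the residual region splits into an upper piece $\Omega_{\mathrm{up}}\subset\{x_1\le a_1\}$ and a right piece $\Omega_{\mathrm{right}}=\Omega\cap\{x_1\ge a_1\}$. The key observation is that $\Omega_{\mathrm{right}}$, translated by $(-a_1,0)$, is itself a concave toric domain defined by the convex function $\tilde f(x_1):=f(x_1+a_1)$ on $[0,a-a_1]$, and, by the recursive definition of the weight sequence, its own weight sequence is a sub-sequence of $(a_2,a_3,\ldots)$. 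Since $\Omega_{\mathrm{up}}$ does not extend the first-coordinate projection past $a_1$, we obtain the exact identity
\[
a \;=\; a_1 \;+\; (\text{first-axis extent of the translated }\Omega_{\mathrm{right}}).
\]

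For finite weight expansions this yields the bound $a\le\sum a_i\le M$ by induction on the number of weights, with the base case being a single triangle. For infinite expansions we iterate: writing $(c_k)$ for the subsequence of $(a_i)$ obtained by taking the first weight of each successive right sub-domain, $N$ iterations give $a=\sum_{k=1}^N c_k+\ell_N$, where $\ell_N$ is the first-axis extent of the $N$-th right sub-domain. The hypothesis $\sum a_i\le M$ forces $\sum c_k$ to converge, and a compactness argument then shows $\ell_N\to 0$: under the standing assumption $f>0$ on $[0,a)$, a positive limiting extent would, by continuity of the first-weight construction in the Hausdorff topology, force $c_{N+1}$ to stay bounded below, contradicting convergence of $\sum c_k$. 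Hence $a=\sum_k c_k\le\sum_i a_i\le M$, which gives the claim.

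The main technical obstacle is the structural claim that the translated right piece is a concave toric domain whose weight sequence is a sub-sequence of $(a_2,a_3,\ldots)$; this requires unpacking the recursive definition of the weight expansion from \cite{ccghr} and verifying that the right-subdomain recursion matches the index bookkeeping in the weight sequence. The compactness/limit argument in the infinite case is the other place where a little care is needed, but it should follow routinely from the finite-sum hypothesis and the continuity of the first-weight functional.
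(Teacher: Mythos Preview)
Your overall strategy---induct along the ``right chain'' of the weight recursion to show $a\le\sum a_i$---is exactly what the paper has in mind when it says the lemma ``immediately follows via induction from the definition of the weight sequence.'' However, your identification of the right piece is incorrect, and this causes the key inductive step to fail.

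The region $\Omega\cap\{x_1\ge a_1\}$, translated by $(-a_1,0)$, is \emph{not} the right piece appearing in the weight recursion of \cite{ccghr}, and its weight sequence is \emph{not} in general a sub-collection of $(a_2,a_3,\dots)$. Concretely, take $f$ piecewise linear with $f(0)=1$, slope $-2$ on $[0,\tfrac14]$ and slope $-\tfrac12$ on $[\tfrac14,\tfrac54]$. Then $a_1=\tfrac34$ (tangent at the kink $(\tfrac14,\tfrac12)$), and the full weight sequence is $(\tfrac34,\tfrac12,\tfrac14)$. Your region $\Omega\cap\{x_1\ge \tfrac34\}$, translated, is the triangle under $\tilde f(x_1)=\tfrac14-\tfrac{x_1}{2}$ on $[0,\tfrac12]$, whose weight sequence is $(\tfrac14,\tfrac14)$---not a sub-collection of $(\tfrac12,\tfrac14)$.

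The fix is easy: use the actual right piece from the recursion. After removing $T_{a_1}$, cut the residual at the tangency point $(x_0,a_1-x_0)$; the right piece is bounded by the $x$-axis, the hypotenuse, and the graph of $f$, and the normalizing affine map is the shear $(x,y)\mapsto(x+y-a_1,y)$, not a translation. Under this shear the point $(a,0)$ goes to $(a-a_1,0)$, so the first-axis extent is still $a-a_1$, and now---by definition of the recursion---its weights genuinely form a sub-collection of $(a_2,a_3,\dots)$. With this correction your finite-case induction and your infinite-case limiting argument both go through unchanged.
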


\section{The subleading asymptotics of ECH capacities and the Minkowski dimension}\label{lowerbounds}

We now begin the work towards the proofs of the main results.
Our main theorems all come from considerations of the subleading asymptotics of the ECH capacities. 
The aim of this section is to prove the following general theorem, which is equivalent to the fractal Weyl law Theorem~\ref{thm:fractal}, but stated in a form that is more useful for our purposes.


\begin{theorem}\label{bdrygrowth} Let $Z$ be a relatively compact and open
subset of a symplectic manifold $(M,\omega)$.  Suppose that the volume decay near $\partial Z$ is of order $q$.  Then there exists a positive constant $C$, depending only on $Z$, such that
$e_k(Z) > - C k^{(2-q)/4}$
for all $k$.
\end{theorem}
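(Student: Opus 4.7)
The plan is to give a quantitative lower bound on $c_k(Z)$ by packing $Z$ with a disjoint union of symplectic balls whose sizes are tuned to the distance from $\partial Z$ --- finer near the boundary (where the geometry is rough) and coarser in the bulk --- and then to control the subleading loss via the Ball and Disjoint Union axioms. This extends Hutchings' weight-based argument from \cite{h2} by iterating the packing dyadically so that the shells adapt to the volume decay rate.

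Concretely, fix a background Riemannian metric $g$ on $M$ (the choice is immaterial by Lemma \ref{metricdep}) and a parameter $J \ge 1$ to be chosen.  I partition a neighborhood of $\partial Z$ in $Z$ into dyadic shells $L_j := \{z \in Z : 2^{-j-1} \le d_g(z, \partial Z) < 2^{-j}\}$ for $0 \le j \le J$, and write $Z_J$ for the remaining core. The volume decay hypothesis gives $\vol(L_j) \le V_{2^{-j}}(Z) \le C \cdot 2^{-jq}$. Inside each $L_j$ I embed $N_j \sim 2^{j(4-q)}$ disjoint symplectic balls of capacity $a_j \sim 2^{-2j}$ --- each ball sitting inside a Euclidean cube of side $\sim 2^{-j}$ drawn from a fine grid covering $L_j$ --- together with a similar efficient packing of the core $Z_J$. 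Using local packing stability at a sub-cube scale (shrinking the cube side by a small factor $\lambda$), the total filling fraction in each shell can be made $1 - o(1)$.

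Let $\mathcal{B}$ be the union of all these balls and $V_{\mathrm{pack}} := \sum_\alpha \vol(B(a_\alpha))$. By monotonicity of ECH capacities, $c_k(Z) \ge c_k(\mathcal{B})$. Combining the Ball axiom's asymptotic $c_i(B(a)) = a\sqrt{2i} + O(a)$ with the Disjoint Union axiom, a Lagrange multiplier calculation for the maximization of $\sum_\alpha a_\alpha\sqrt{i_\alpha}$ subject to $\sum_\alpha i_\alpha = k$ (optimum at $i_\alpha \propto a_\alpha^2$) yields
\[
c_k(\mathcal{B}) \;\ge\; 2\sqrt{V_{\mathrm{pack}} \cdot k}\; - \; C_1 \sum_\alpha a_\alpha .
\]
The two subleading losses are then $O\!\left(2^{-Jq}\sqrt{k}\right)$ from $\vol(Z) - V_{\mathrm{pack}} \le C \cdot 2^{-Jq}$, and $\sum_\alpha a_\alpha = \sum_{j=0}^J N_j a_j \le C\sum_{j=0}^J 2^{j(2-q)} \le C' 2^{J(2-q)}$ (valid for $q < 2$; the case $q \ge 2$ is better and handled analogously). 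Choosing $J$ to balance the two, i.e., $2^J \sim k^{1/4}$, makes each of order $k^{(2-q)/4}$, so $c_k(Z) \ge 2\sqrt{\vol(Z)\, k} - C k^{(2-q)/4}$ and hence the claimed bound on $e_k(Z)$.

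The main technical obstacle I anticipate is Step 2: realizing the dyadic packing so that the filling fraction in each shell tends to $1$, even when $\partial Z$ is very irregular. The resolution is to pack each small cube at a finer sub-scale $\lambda \cdot 2^{-j}$ using local packing stability, and let $\lambda \to 0$ at the end. A minor subtlety is that the $\liminf$ definition of volume decay only gives $V_d \le C_\epsilon d^{q-\epsilon}$ for small $d$; this propagates to an $\epsilon$-loss in the exponent, which can either be absorbed into the constant $C$ or handled by invoking the hypothesis in the cleaner form $V_d \le C d^q$ that the applications actually require.
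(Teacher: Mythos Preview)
Your overall strategy --- a dyadic packing adapted to the distance from $\partial Z$, then balancing the volume-deficit error $(\vol(Z) - V_{\mathrm{pack}})\sqrt{k}$ against the $\sum_\alpha a_\alpha$ error, with the crossover at scale $2^{-J}\sim k^{-1/4}$ --- is exactly right and is the paper's approach as well. But there is a genuine gap in your execution: balls do not tile. If you inscribe one ball in each Euclidean cube the filling fraction is the fixed constant $\pi^2/32$, so $\vol(Z) - V_{\mathrm{pack}} \approx (1-\pi^2/32)\vol(Z)$ is a positive constant independent of $J$, and this contributes an $O(\sqrt{k})$ term to $e_k$ that swamps the bound you want. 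Your proposed fix --- pack each cube at a finer sub-scale $\lambda$ via packing stability and let $\lambda\to 0$ --- does not rescue this: achieving filling fraction $1-\delta$ with $m$ equal balls in a cube of side $s$ forces $\sum a = m a \sim s^2\sqrt{m}$, so sending $\delta\to 0$ (hence $m\to\infty$) blows up the $\sum_\alpha a_\alpha$ error by the factor $\sqrt{m}$. The two errors cannot be simultaneously balanced with balls.

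The paper's resolution is to pack with \emph{polydiscs} $P(a,a)$ rather than balls. These are symplectomorphic to cubes and therefore tile $\RR^4$ exactly: the standard dyadic Whitney decomposition leaves unfilled only the $2^{1-n}$-neighborhood of $\partial Z$, whose volume is $\le C\,2^{-qn}$ by hypothesis, so the filling-fraction problem disappears. The polydisc analogue of your Ball-axiom asymptotic is Hutchings' estimate $e_k(P(a,a)) \ge -2a$ from \cite[Lem.~4.1]{h2}, and your Lagrange-multiplier step is already packaged as \cite[Lem.~4.2]{h2}. For the general manifold case the paper first covers $\overline{Z}$ by finitely many Darboux balls, refines into pieces $\Delta_j$ each sitting inside $\RR^4$, applies the Euclidean bound to each $\Delta_j$ (whose boundary now has two parts: the piece of $\partial Z$, with decay $d^q$, and the smooth chart-boundaries, with decay $d$), and reassembles via the Disjoint Union axiom with $i_j \approx k\,\vol(\Delta_j)/\vol(Z)$. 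This reduction is a step your sketch leaves implicit but is needed to make sense of ``Euclidean cubes drawn from a fine grid'' inside an abstract $(M,\omega)$.
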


One does not expect a similar upper bound in this level of generality; for example, even for smooth compact Liouville domains the ECH capacities could in principle be infinite, see for example the discussion after \cite[Thm. 1.1]{cghr}.

The proof makes use of the following proposition, which is a quantified variant
of Theorem 1.1 from \cite{h2}. 

\begin{proposition} \label{hest} Let $X \subset \RR^4$ be an open
 domain 
and suppose there are constants $C$ and $q$ such that $V_d(X) < Cd^q$.  Then there exists a positive constant $C$, depending only on $X$, such that
 $e_k(Z) \ge - C k^{(2-q)/4}$ for all $k$.
\end{proposition}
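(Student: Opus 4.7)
The plan is to refine Hutchings' qualitative argument from \cite{h2} by constructing a multiscale symplectic ball packing of $X$ via a Whitney-type dyadic cube decomposition, and then invoking packing stability for open polydisks from \cite{busehindopshtein16} cube by cube. A naive uniform packing at a single length scale gives only a weaker bound of the shape $e_k \geq -Ck^{1/(q+2)}$, so exploiting the full multiscale hierarchy is what is needed to reach the sharp exponent $(2-q)/4$.

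First I would fix a parameter $L_{\min} > 0$ (to be optimized later in terms of $k$) and perform a truncated Whitney-type decomposition of $X$: a disjoint collection of open cubes $\{C_i\}$ of sides $L_i \geq L_{\min}$ with $L_i \leq \mathrm{dist}(C_i,\partial X) \leq 4L_i$, whose union covers all of $X$ except a region within distance $\lesssim L_{\min}$ of $\partial X$, hence of volume at most $V_{cL_{\min}}(X) \leq CL_{\min}^q$ by hypothesis. Each $C_i$ with its induced symplectic form is an open polydisk of volume $L_i^4$, so packing stability for open polydisks provides a universal constant $N_0$ for which $C_i$ admits a symplectic embedding of $N_0$ disjoint equal balls of size $a_i = \sqrt{2/N_0}\,L_i^2$ with no wasted volume. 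Taking the union over $i$, this yields disjoint balls in $X$ with total packed volume $V_{\mathrm{pack}} \geq \mathrm{vol}(X) - CL_{\min}^q$ and total symplectic-size sum $\sum_{i,j}\alpha_{i,j} = \sqrt{2N_0}\sum_i L_i^2$.

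Next I would apply the Ball Axiom bound $c_m(B(\alpha)) \geq \alpha(\sqrt{2m} - 1)$ together with the Disjoint Union Axiom, using the partition with $k_j$ proportional to $\alpha_j^2$; this gives the general inequality $c_k\!\left(\bigsqcup_j B(\alpha_j)\right) \geq 2\sqrt{V_{\mathrm{pack}}\,k}\,-\,\sum_j \alpha_j$. Combined with monotonicity and $\sqrt{a-b}\geq \sqrt{a} - b/\sqrt{a}$, this produces
\[ e_k(X) \;\geq\; -\,C_1\,L_{\min}^q\,\sqrt{k}\;-\;C_2\,\textstyle\sum_i L_i^2. \]
The crux is then bounding $\sum_i L_i^2$: stratifying the Whitney cubes by dyadic scale $L = 2^{-j}$, all such cubes lie in an $O(2^{-j})$-neighborhood of $\partial X$ of volume at most $C\,2^{-jq}$, giving at most $C\,2^{j(4-q)}$ cubes at scale $j$; so $\sum_i L_i^2 \leq C\sum_{j\leq -\log_2 L_{\min}} 2^{j(2-q)} \lesssim L_{\min}^{q-2}$ in the relevant regime $q < 2$.

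Finally I would optimize: choosing $L_{\min} = k^{-1/4}$ equates the two error terms $L_{\min}^q\sqrt{k}$ and $L_{\min}^{q-2}$ to $k^{(2-q)/4}$, yielding $e_k(X) \geq -C\,k^{(2-q)/4}$ as claimed. The main obstacle I anticipate is carrying out the Whitney-type decomposition so that the scale-by-scale counting genuinely relies only on the hypothesis $V_d(X) \leq Cd^q$, and identifying each cube symplectically with the appropriate polydisk so that the stability statement from \cite{busehindopshtein16} applies uniformly in scale. The boundary cases $q = 2$ (where the geometric series becomes logarithmic) and $q > 2$ (where $\sum_i L_i^2$ converges and one may use an untruncated Whitney decomposition) should be handled by minor modifications of the same scheme.
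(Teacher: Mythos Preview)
Your approach is essentially the paper's: both perform a dyadic cube decomposition of $X$, count the cubes at each scale via the hypothesis $V_d(X)\le Cd^q$ to get $\sum_i a_i \lesssim 2^{n(2-q)}$, and then balance the volume-deficit term against the sum-of-sizes term to extract the exponent $(2-q)/4$. The one substantive difference is that the paper works directly with the cubes as polydisks, invoking Hutchings' estimate $e_k(P(a,a))\ge -2a$ and his packing lemma \cite[Lem.~4.2]{h2}, whereas you first convert each cube into $N_0$ equal balls via packing stability for open polydisks \cite{busehindopshtein16} and then apply the Ball and Disjoint Union axioms by hand. Your detour through balls is unnecessary---and you should note that $p_{N_0}=1$ is a supremum, so ``no wasted volume'' must be interpreted up to an arbitrarily small loss absorbed into the constants---but the argument goes through and yields the same bound.
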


\begin{proof} We follow closely the argument in \cite[Thm. 1.1]{h2}, which essentially deals with the case $q=1$.

Let $P(a,a) = X_{\Omega}$ be a polydisc: this corresponds to the case where $\Omega$ is a square of side lengths $a$ with legs on the axes.  One first estimates the subleading asymptotics $e_k$ of a polydisc by using the fact that the ECH capacities in this case have an explicit combinatorial formula, see \cite[Lem. 4.1]{h2}, with the conclusion that $e_k(P(a,a)) \ge -2a$.  The polydiscs $P(a,a)$ are symplectomorphic to cubes.

We now fill the interior of $X$ with open cubes, by making use of the lattice $2^{-n} \mathbb{Z}^4$, as follows.   First, we add in all open cubes contained in $X$ with vertices on neighboring $2^{-1} \mathbb{Z}^4$ lattice points.  Next, we add in all open cubes contained in $X$ with vertices on neighboring $2^{-2} \mathbb{Z}^4$ lattice points that do not intersect the cubes from the previous step.  We continue inductively in this way, getting at the $n^{th}$ step an embedding into $X$ of a disjoint union of a finite number of (open) cubes, each of volume $2^{-4n}$.

Now we apply \cite[Lemma 4.2]{h2}, which says the following. (The lemma was originally stated for bounded domains, but this assumption is not used in the proof.)

\begin{lemma} Suppose there exists a symplectic embedding $$\sqcup P_i \hookrightarrow X$$ where $P_i$ is a copy of $P(a_i, a_i)$. Let $I_k = \{ i \, | \, a_i^2 \ge \mathrm{vol}(X)/k \}$ and let $$V_k = \mathrm{vol} \left( \cup_{i \in I_k} P_i \right) = \sum_{i \in I_k} a_i^2.$$
Then $$e_k(X) \ge -2 \sqrt{2} \sum_{i \in I_k} a_i + 2 \frac{ (V_k - \mathrm{vol}(X) )}{\sqrt{\mathrm{vol}(X)}} \sqrt{k}.$$
\end{lemma}

Returning to our situation, let $X_n$ be the union of cubes constructed in the first $n$ steps. We see that $X \setminus X_n \subset N_{2^{1-n}}$ and so has volume at most $C 2^{-qn}$. As the cubes added in the $n^{th}$ step each have volume $2^{-4n}$, we see the number added is bounded above by $C 2^{n(4-q)}$.

Now suppose $$16^n \le \frac{k}{\mathrm{vol}(X)} < 16^{n+1}.$$

Then $I_k$ is exactly the indices corresponding to the cubes added in the first $n$ steps.

We have $$\sum_{i \in I_k} a_i \le \sum_{j=1}^n C 2^{j(4-q)} 2^{-2j} < C 2^{n(2-q)}$$
and $$V_k - \mathrm{vol}(X) > - C 2^{-qn}.$$
Therefore the lemma gives $$e_k(X) \ge  - C k^{(2-q)/4} $$ as required.

\proofend
\end{proof}

We can now prove the main theorem from this section.

\begin{proof}({\em Proof of Theorem~\ref{bdrygrowth}})

Since $Z$ is relatively compact, it is contained in a compact set $K$.  Since $K$ is compact, we can cover it with finitely many compact Darboux balls $B_1, \ldots, B_n$, with domain centered at the origin in $\mathbb{R}^4$; we identify  these balls with their image.  The union of the $B_i$ admits a 
refinement into a collection of compact sets $S_j$, $1 \le j \le N$, 
with disjoint interiors: let $A$ be the union of the $B_i$, let $A'$ be the set obtained by subtracting all of the boundaries of the $B_i$ from $A$, and then take the $S_i$ to be the closures of the connected components of $A'.$
(Here, by a refinement we mean that each $S_j$ is contained in a $B_i$, and the union of the $S_j$ is precisely the union of the $B_i$.)
We identify each $S_j$ with its image in $M$.

Now consider $\Delta_j = int(Z \cap S_j)$.  This is symplectomorphic to an open and bounded subset $Q_i$ of $(\mathbb{R}^4,\omega_{std})$.  The identification of $S_j$ with a subset of $M$ induces a decomposition
\[ \partial Q_i = X_{reg} \cup X_Z ,\]
where $X_{reg}$ are those points that correspond to points on the boundary of some $S_j$, and $X_Z$ are those points that correspond to points in $\partial Z \subset M$.  Letting $N_d(V) \subset V$ be the points within distance $d$ of $\partial V$, we therefore have
\[ N_d(\partial Q_i) = N_d( X_{reg} ) \cup N_d(X_Z).\]
The set $X_{reg}$ is contained in a finite union of compact hypersurfaces, since its image in $M$ is contained in the union of the boundaries of the $S_i$.  Thus, by Lemma \ref{lem:smoothcase} the volume of $N_d( X_{reg} )$ is $O(d)$.  On the other hand, by the argument in Lemma \ref{metricdep}, the volume of $N_d(X_Z)$ is, up to a constant, bounded by the volume of the tube $N_d(\partial Z)$ in $M$ and thus $O(d^q)$ by assumption.

Now, we have $$c_k(Z) \ge c_k( \sqcup \Delta_j)$$ and we estimate the right hand side using the Disjoint Union Axiom.

\[ c_k( \sqcup \Delta_j) = \sup_{i_1 + \dots + i_N = k} \sum c_{i_j}(\Delta_j) \]
By choosing $i_j = \lfloor k \frac{ vol(\Delta_j)}{vol(Z)} \rfloor$, Lemma \ref{hest} gives
\[ \sup_{i_1 + \dots + i_N = k} \sum c_{i_j}(\Delta_j) \ge \sum 2 \sqrt{ k \frac{ vol(\Delta_j)^2}{vol(Z)} - \epsilon_j vol(\Delta_j) } - C_j i_j^{(2-q)/4},\]
where $0 \le \epsilon_j < 1$ is the fractional part of $k\frac{ vol(\Delta_j)}{vol(Z)}$.

Continuing, the right hand side in the above inequality is bounded from below by
\[ \sum\left( 2 \sqrt{ k \frac{ vol(\Delta_j)^2}{vol(Z)}} - 2\sqrt{\epsilon_j vol(\Delta_j) } - C_j i_j^{(2-q)/4}\right) \]
\[ = 2 \sqrt{ k vol(Z) } - \sum \left( 2\sqrt{\epsilon_j vol(\Delta_j) } + C_j i_j^{(2-q)/4} \right)\]
\[ \ge 2 \sqrt{ k vol(Z) } - 2N vol(Z) - C k^{(2-q)/4}, \]
for $C = \sum C_j$, since $$\sum C_j i_j^{(2-q)/4} \le (\sum C_j) max_j(i_j)^{(2-q)/4} \le (\sum C_j) ( \sum i_j)^{(2-q)/4}.$$
The required bound for $e_k(Z)$ follows immediately.
\proofend

\end{proof}

\section{The subleading asymptotics of unbounded concave toric domains}\label{upperbounds}

The previous section gave general lower bounds on the subleading asymptotics of any relatively compact open subset.
In this section, we aim to prove the results we will need about unbounded concave toric domains, which will be an essentially precise calculation of the subleading asymptotics.  More precisely, we aim to prove the following:

\begin{proposition}\label{upperprop}
Let $X$ be an unbounded concave toric domain. Then $$\lim_{k \to \infty} e_k(\text{int}(X))  = - \infty.$$
In the case when $X$ is defined by a convex function $f$ with decay rate $1 < p \le 2$, for any $\epsilon>0$
there exists a constants $C_1$ and $C_2$ such that $$-C_1 k^{1/2p + \epsilon} < e_k(X_p) < -C_2 k^{1/2p - \epsilon}.$$
\end{proposition}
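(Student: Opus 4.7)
My strategy reduces everything to the ball decomposition \eqref{eqn:concaveball} and a combinatorial optimization. Exhaust $\operatorname{int}(X_f)$ by an increasing sequence of compact concave toric Liouville subdomains $X_{\Omega_n}$, obtained by truncating $f$ at $x = n$. Using monotonicity, formula \eqref{eqn:concaveball}, and a limiting argument (along the lines of \cite[Rem.~1.6]{ccghr}), obtain
\[ c_k(\operatorname{int}(X_f)) \;=\; \sup_{(k_i):\ \sum_i k_i = k,\ k_i \in \mathbb{Z}_{\ge 0}}\ \sum_i a_i\, c_{k_i}(B(1)), \]
where $(a_i)_{i \ge 1}$ is the (countably infinite) weight sequence of $X_f$, with $\sum_i a_i^2 = 2\operatorname{vol}(X_f)$. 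The continuous relaxation of this sup equals $2\sqrt{k \cdot \operatorname{vol}(X_f)}$ by Cauchy--Schwarz applied via the pointwise bound $c_{k_i}(B(1)) \le \sqrt{2 k_i}$, so $e_k$ directly quantifies the loss from the integer constraint on the $k_i$.

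For the first claim ($\lim e_k = -\infty$ for general unbounded $X_f$): any integer partition has only finitely many nonzero entries, so the positive tail of weights $a_i \to 0$ must include balls assigned $k_i=0$; by the strictness of Cauchy--Schwarz combined with the positivity of $\sum_{i>N} a_i^2$ for every $N$, this forces a deficit that tends to $\infty$ with $k$.

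For the quantitative bounds, first estimate the weight asymptotics using the recursive packing construction of \cite{ccghr}: successive balls placed along the positive $x$-axis have radii satisfying $R_{n+1} - R_n \asymp f(R_n)$, so substituting $f(x) \asymp x^{-p}$ gives $R_n \asymp n^{1/(p+1)}$ and hence $a_n \asymp n^{-p/(p+1)}$, with $o(1)$-slack in the exponents for a general $f$ of decay rate $p$ (the source of the $\epsilon$-slack). For the lower bound $e_k > -C_1 k^{1/2p + \epsilon}$, exhibit the explicit partition $k_i = \lfloor k a_i^2/(2V) \rfloor$ for the first $N^*(k) \asymp k^{(p+1)/(2p)}$ indices and $k_i = 0$ elsewhere (redistributing residual mass to the largest ball), and apply $c_{k_i}(B(1)) \ge \sqrt{2k_i} - 1$ to estimate $\sum_i a_i c_{k_i}(B(1)) \ge 2\sqrt{Vk} - C k^{1/2p + \epsilon}$. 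For the upper bound $e_k < -C_2 k^{1/2p - \epsilon}$, bound the sup over \emph{all} integer partitions via $c_{k_i}(B(1)) \le \sqrt{2k_i}$ and a concavity/duality argument: the balls with $a_i \lesssim \sqrt{V/k}$ collectively carry mass $\sum a_i^2 \asymp (V/k)^{(p-1)/(2p)}$, and any integer partition must either discard this mass (direct loss $\sqrt{k/V}\cdot(V/k)^{(p-1)/(2p)} \asymp k^{1/2p}$) or reallocate it to the leading balls (loss quantified by the strict concavity of $\sqrt{\cdot}$ at the continuous optimum $k_i \propto a_i^2$); either way the deficit is bounded below by $\asymp k^{1/2p-\epsilon}$.

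The main obstacle will be the upper bound on $c_k$: controlling the sup over an infinite-dimensional integer lattice requires a careful deficit analysis that rules out clever reallocations of the residual mass. I intend to handle this by splitting the partition at the critical threshold $a_i \approx \sqrt{V/k}$ and applying Cauchy--Schwarz/concavity estimates on each side, combined with the asymptotics $a_n \asymp n^{-p/(p+1)}$; the weaker weight estimate available for general $f$ of decay rate $p$ is precisely what forces the $\epsilon$ in the final statement.
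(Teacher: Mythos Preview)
Your reduction to the weight sequence and the optimization formulation is exactly what the paper does, and your weight asymptotics $a_n \asymp n^{-p/(p+1)}$ agree with the paper's Lemma~\ref{weightestimate} and Lemma~\ref{derivest}. Two points are worth flagging.

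\textbf{Lower bound: a different route.} For $e_k > -C_1 k^{1/2p+\epsilon}$ you construct an explicit test partition $k_i = \lfloor k a_i^2/(2V)\rfloor$ and sum the ball estimates. The paper instead never touches the weight sequence for this direction: it computes the volume decay $V_d(X_f) \lesssim d^{2-2/p-\epsilon}$ (Lemma~\ref{volest2}) and feeds this into the cube-packing estimate of Proposition~\ref{hest}. Your approach is more self-contained for this particular family; the paper's approach reuses the general fractal Weyl machinery of Section~\ref{lowerbounds} and so links the lower bound to the Minkowski dimension, which is what the rest of the paper needs.

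\textbf{First claim: a gap.} Your argument for $e_k \to -\infty$ in the general unbounded case does not work as stated. From Cauchy--Schwarz restricted to the active indices you only get $e_k \le -2\sqrt{k}\bigl(\sqrt{V}-\sqrt{V_N}\bigr)$ where $V_N = \tfrac12\sum_{i\le N} a_i^2$, and $N = N(k)$ is the support of the optimizer. But $N(k)$ grows with~$k$, and nothing you have said bounds $\sqrt{k}(V-V_{N(k)})$ away from zero: for instance if $a_i \asymp 1/i$ (realized by $f(x)=e^{-x}$, a perfectly good unbounded concave domain) then the natural cutoff is $N(k)\asymp\sqrt{k}$ and $V-V_N \asymp 1/N$, so your deficit is $O(1)$, yet $e_k \to -\infty$ still holds. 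The paper's fix is to apply Cauchy--Schwarz to $\sum a_i\sqrt{d_i^2+d_i}$ with the \emph{full} weight vector, giving
\[
e_k \;\le\; -\sum_{d_i\ge 1} a_i\bigl(\sqrt{d_i^2+d_i}-d_i\bigr) \;\le\; -(\sqrt 2 - 1)\sum_{i<I(k)} a_i,
\]
and then to show $I(k)\to\infty$ by a swap argument (their \eqref{eqn:bound1}). The key input is $\sum a_i = \infty$, which holds precisely because the domain is unbounded (Lemma~\ref{lem:bound}); this is the ingredient missing from your sketch. The same swap argument is what drives their quantitative upper bound as well, replacing your ``concavity/duality'' outline with an explicit inequality $d(k)_j \le a_j / a_{I(k)+d(k)_j-1}$.
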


\begin{remark}
\label{rmk:upperp}
The proof of the upper bound in the above proposition does not use the condition $p < 2$ at all; however, we do not need this result for any of the applications in our paper.
\end{remark}

\subsection{The weight expansion and the ECH capacities}

We begin by generalizing the relationship between the ECH capacities and the weight sequence for concave toric domains to the case of unbounded concave toric domains.

To shorten the notation, denote any unbounded concave toric domain $X_{\Omega}$
by $X.$
To prove the theorem, it is helpful to approximate $X$ by concave toric domains $X_n$, as follows.  The definition of the weight sequence in \cite[Sec. 1.3]{ccghr} extends verbatim to $X$, even though it is infinite, since the function $f$ is convex.  Thus we obtain a sequence of weights $a_i$ associated to $X$, which also have the property that 
\[ a_1, \ldots, a_n\]
are the weights for some concave toric domain; we denote this concave toric domain by $X_n$.   There is some flexibility in how to order the $a_i$; we assume that they are in nonincreasing order.

Note that by definition,
\begin{equation}
\label{eqn:exhau}
 X_1 \subset X_2 \ldots \subset X_n \subset \ldots
 \end{equation}
The following lemma explains what we need to know regarding how the domains $X_n$ approximate $X$:

\begin{lemma}
\label{lem:exhaustion}
$int(X) = \cup_n int(X_n)$ 
\end{lemma}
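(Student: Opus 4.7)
The plan is to verify the two set inclusions separately. The forward inclusion $\bigcup_n \mathrm{int}(X_n) \subset \mathrm{int}(X)$ is immediate: the inclusions $X_n \subset X$ built into the weight-expansion construction (which produces the nested chain (3.1)) pass to topological interiors in $\mathbb{C}^2$.

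The reverse inclusion is the substance, and I will argue pointwise. Fix $p \in \mathrm{int}(X)$ and let $(x_0, y_0) := (\pi|z|^2, \pi|w|^2)$ denote the moment image of $p = (z, w)$. The interiority of $p$ translates to $y_0 < f(x_0)$, and the goal is to produce $n$ with $(x_0, y_0) \in \mathrm{int}(\Omega_n)$, where $\Omega_n$ denotes the moment region of $X_n$. Each $\Omega_n$ is a bounded concave toric region, bounded by the axes and a convex decreasing function $f_n$ on some $[0, a^{(n)}]$ with $f_n(a^{(n)}) = 0$; extending $f_n$ by zero to $[0,\infty)$ preserves convexity, since the left derivative of $f_n$ at $a^{(n)}$ is nonpositive. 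The nested inclusions $\Omega_n \subset \Omega_{n+1} \subset \Omega$ translate into pointwise monotone bounds $f_n \leq f_{n+1} \leq f$. If I can establish the pointwise convergence $f_n \to f$ on $(0,\infty)$, then the strict inequality $y_0 < f(x_0)$ yields $y_0 < f_n(x_0)$ for $n$ sufficiently large, placing $p$ in $\mathrm{int}(X_n)$.

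Pointwise convergence will follow from a measure-and-continuity argument. The identities $\int_0^\infty f_n = \mathrm{vol}(X_n) = \sum_{i=1}^n a_i^2/2$ and $\int_0^\infty f = \mathrm{vol}(X) = \sum_{i=1}^\infty a_i^2/2$---the latter being the tiling identity for the weight expansion, extended from the bounded case of \cite{ccghr} to our unbounded setting---combine with monotone convergence to give $\int_0^\infty F = \int_0^\infty f$ for $F := \lim_n f_n$. Since $F \leq f$ with equal integrals, $F = f$ almost everywhere; and because $F$ is a pointwise monotone limit of convex functions, $F$ itself is convex and hence continuous on $(0,\infty)$, so the almost-everywhere equality upgrades to pointwise equality $F = f$ on $(0,\infty)$. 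The boundary moment cases (points with $x_0 = 0$ or $y_0 = 0$ that nonetheless correspond to $p \in \mathrm{int}(X)$) are handled by the same integral argument: the monotonicity of $f_n$ in $x$ forces $f_n(0) \to f(0)$, and the right endpoints $a^{(n)}$ must tend to infinity, for otherwise the equality $F = f$ almost everywhere would fail on $(A, \infty)$ where $A := \lim a^{(n)}$.

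The main technical obstacle will be the tiling identity $\sum_{i=1}^\infty a_i^2/2 = \int_0^\infty f$ for unbounded $\Omega$. I would establish this by approximating $\Omega$ with bounded concave toric sub-regions formed by truncating $f$ with a straight line down to the $x$-axis at large $x$, applying the tiling identity in the bounded case \cite{ccghr}, and verifying that the weight expansions of these truncations converge compatibly with the weight expansion of $X$ as the truncation is refined.
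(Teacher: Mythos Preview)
Your approach is valid in outline but takes a genuinely different route from the paper. The paper argues directly and geometrically: it identifies $\cup_n \mathrm{int}(\Omega_n)$ with the union $\Omega'$ of all triangles bounded by the axes and a tangent line to the graph of $f$ of rational slope (this is read off from the recursive definition of the weight expansion), and then shows that any interior point $(a,b)$ of $\Omega$ lies in such a triangle by choosing a point $(c,d)$ on the graph with $c>a$, $d>b$ and using the tangent there, perturbing to rational slope if needed.

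Your route instead reduces everything to the tiling identity $\sum a_i^2/2 = \int f$ and then uses a clean convexity argument to upgrade the resulting almost-everywhere equality $F=f$ to pointwise equality. The reduction is correct, but notice that the tiling identity and the lemma are in fact equivalent (each implies the other via your own argument), so you have translated the problem rather than simplified it: all of the geometric content now sits in your final paragraph. Your truncation sketch \emph{can} be completed --- for instance, truncating along the tangent line of slope $-1/m$ yields a bounded concave region whose weight expansion is literally a subcollection of the weight triangles of $\Omega$, and the leftover area tends to $0$ by a short convexity estimate --- but carrying out the ``compatible convergence'' of weight expansions requires exactly the recursive tangent-line picture of the weight expansion that the paper invokes directly. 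In short: the paper's argument is shorter and keeps the geometry explicit; yours is more modular (separating the measure statement from the convexity upgrade) but leaves the geometric core hidden inside an unproved step.
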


\begin{proof}
An equivalent description of $\cup_n int(X_n)$, that follows from the definition of the weight expansion, is that  $\cup_n int(X_n) = X_{\Omega'}$, where $\Omega'$ is the union of all triangles
bounded by the axes and a\footnote{Recall that by a {\em tangent line} in this possibly non-smooth, but convex, setting, we mean a straight line through a point on the graph of $f$ whose intersection with the first quadrant stays entirely in the region bounded by the axes and the graph of $f$.} tangent line to the graph of $f$ with rational slope, with the vertical and horizontal axes contained in $\Omega'$ but not the hypotenuse.    

It follows from the definitions that $X_{\Omega'} \subset int(X)$.  To see the opposite inclusion, given any point $x \in int(X_{\Omega})$, let $(a,b)$ denote the corresponding point in $\Omega$, and choose a point $(c,d)$ on the graph of $f$ with $c > a$ and $d > b$.  Then $(a,b)$ lies in the triangle bounded by the axes and any tangent line to the graph of $f$ at $(c,d)$, and is not on the hypotenuse.  If there is a tangent line at $(c,d)$ with rational slope then the corresponding triangle lies in $\Omega'$ and we are done; if it does not, we can 
find a point $(c', d')$ on the graph of $f$ which has a tangent line of rational slope sufficiently close to the irrational slope at $(c,d)$, and the corresponding triangle will still contain $(a,b)$. 
\proofend
\end{proof} 

We can now extend \eqref{eqn:concaveball} to our manifold $X$, which is a limit of concave toric domains.

\begin{lemma}
\label{lem:key}

For every $k$
\[ c_k(int(X)) = c_k( \sqcup_i B(a_i) ) .\]

\end{lemma}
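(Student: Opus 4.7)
The plan is to establish the two inequalities $c_k(\text{int}(X)) \ge c_k(\sqcup_i B(a_i))$ and $c_k(\text{int}(X)) \le c_k(\sqcup_i B(a_i))$ separately, leveraging the exhaustion $\text{int}(X) = \bigcup_n \text{int}(X_n)$ from Lemma~\ref{lem:exhaustion} together with the finite-case identity $c_k(X_n) = c_k(\sqcup_{i=1}^n B(a_i))$ supplied by \eqref{eqn:concaveball} (which applies since the weight expansion of $X_n$ is precisely $a_1, \ldots, a_n$).

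For the lower bound, the canonical triangle/ball decomposition of \cite[Sec.~1.3]{ccghr} applied to $X_n$ yields, after a mild $(1-\delta)$-rescaling to push the packing off the boundary, a symplectic embedding
\[
\sqcup_{i=1}^n B((1-\delta)a_i) \hookrightarrow \text{int}(X_n) \subset \text{int}(X).
\]
Monotonicity together with the Disjoint Union Axiom then gives $c_k(\text{int}(X)) \ge c_k(\sqcup_{i=1}^n B((1-\delta)a_i))$. Now, any decomposition $k = k_1 + k_2 + \cdots$ appearing in the Disjoint Union formula for $c_k(\sqcup_i B(a_i))$ has only finitely many nonzero parts (at most $k$), so the supremum is approached by finite subcollections: for every $\epsilon>0$ there exists $n$ with $c_k(\sqcup_{i=1}^n B(a_i)) > c_k(\sqcup_i B(a_i)) - \epsilon$. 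Letting $\delta,\epsilon\to 0$ and invoking continuity of $c_k$ in the ball radii (from the Ball Axiom) yields $c_k(\text{int}(X)) \ge c_k(\sqcup_i B(a_i))$.

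For the reverse inequality, I would use the sup-over-Liouville-subdomains definition $c_k(\text{int}(X)) = \sup_Z c_k(Z)$. Any such compact $Z$ lies in the open cover $\{\text{int}(X_n)\}$, which is increasing by \eqref{eqn:exhau}, so compactness forces $Z \subset \text{int}(X_n)$ for some $n$. Monotonicity and \eqref{eqn:concaveball} then give
\[
c_k(Z) \le c_k(X_n) = c_k\bigl(\sqcup_{i=1}^n B(a_i)\bigr) \le c_k\bigl(\sqcup_i B(a_i)\bigr),
\]
and taking the supremum over $Z$ concludes.

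The only delicate point I expect is verifying that the canonical ball packing of $X_n$ actually lies in the open interior $\text{int}(X_n)$ rather than just in $X_n$; this is handled uniformly by the $(1-\delta)$-rescaling above, after which the packed balls have closures strictly inside $\text{int}(X_n)$. Beyond this, the argument amounts to a routine diagonal-style combination of the exhaustion with the axioms of ECH capacities and the known finite-case formula.
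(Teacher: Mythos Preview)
Your proof is correct and follows essentially the same approach as the paper: the upper bound argument (compact $Z$ lands in some $\text{int}(X_n)$, then use \eqref{eqn:concaveball}) is identical, and for the lower bound the paper simply invokes Monotonicity directly from the embedding $\sqcup_i B(a_i) \hookrightarrow \text{int}(X)$, whereas you reach the same conclusion via the slightly more explicit route of finite truncation plus $(1-\delta)$-rescaling and a limit. Your extra care about the interior is not wrong, just more than the paper deems necessary.
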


\begin{proof}  By the Monotonicity Property of ECH capacities \cite{h1}, we have
\[ c_k(int(X)) \ge c_k( \sqcup_i B(a_i)),\]
so we have to establish the opposite inequality.   By definition \cite[Defn. 4.9]{h1}, 
\[ c_k(int(X)) := \sup_{Z} \hspace{1 mm} c_k(Z),\]
where $Z$ is a compact Liouville domain in $int(X)$. 
Then $Z$ embeds into some $int(X_n)$.  To see this, assume the opposite, and define a sequence $z_n \in Z$ such that $z_n$ is not in $int(X_n)$; pass to a subsequence converging to some $z_{\infty} \in Z$; the point $z_\infty$ is in $int(X)$, hence by Lemma~\ref{lem:exhaustion} it is in $int(X_n)$ for some $n$, which is a contradiction in view of \eqref{eqn:exhau}.   

Thus we have $c_k(Z) \le c_k(X_n)$ for all $k$.  On the other hand, by \eqref{eqn:concaveball}, $c_k(X_n) = c_k( \sqcup^n_{i=1} B(a_i))$ and so we have that
\[ c_k(Z) \le c_k(\sqcup_i B(a_i)).\]
Therefore,
\[  \sup_{Z} \hspace{1 mm} c_k(Z) \le c_k(\sqcup_i B(a_i)),\]
and so the lemma is proved.
\proofend
\end{proof}

The following is a useful method to estimate weights of concave toric domains, including the unbounded case.

\begin{lemma} \label{weightestimate} Suppose $f'(x) = -1/i$. Then
$a_i \ge f(x)$.
\end{lemma}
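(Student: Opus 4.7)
The plan is to proceed by induction on $i$, using the recursive structure of the weight expansion of concave toric domains recalled in Section~\ref{ECHprelim}.

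For the base case $i = 1$: since $f'(x) = -1$, the tangent line to the graph of $f$ at $(x, f(x))$ has equation $u + v = x + f(x)$, and by convexity of $f$ this tangent lies below the graph. Hence the ``ball triangle'' $\{u + v \le x + f(x)\}$ is contained in $\Omega$, and directly from the definition of the weight expansion $a_1 = x + f(x) \ge f(x)$.

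For the inductive step $i \ge 2$: since $-1/i > -1$ and $f'$ is non-decreasing, there exists $x_0 \le x$ with $f'(x_0) = -1$, so the first weight extracted is $a_1 = x_0 + f(x_0)$, corresponding to the slope-$(-1)$ tangent triangle at the origin. After extraction, $\Omega$ splits into two sub-regions $L$ and $R$, with $(x, f(x)) \in R$. I would apply the affine map $(u,v) \mapsto (u+v-a_1, v)$, a translation composed with an element of $SL(2,\mathbb{Z})$, to bring $R$ into standard unbounded-concave-toric form $\Omega_R$ defined by a convex function $g$. A direct chain-rule computation gives the slope transformation $g'(u') = f'(u)/(1+f'(u))$; so at the image point $x' = x + f(x) - a_1$ one has $g'(x') = -1/(i-1)$ and $g(x') = f(x)$. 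Applying the inductive hypothesis to $\Omega_R$ at index $i-1$, the $(i-1)$-th weight of $X_{\Omega_R}$ is $\ge f(x)$. Combining with $a_1 \ge f(x_0) \ge f(x)$ (since $x_0 \le x$ and $f$ is decreasing), and using that the weight expansion of $X_\Omega$ is the multiset union of $\{a_1\}$ with the weight expansions of $X_L$ and $X_R$, we obtain at least $i$ weights of $X_\Omega$ that are $\ge f(x)$, so $a_i \ge f(x)$.

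The main obstacle will be to justify the affine transformation carefully: in particular, checking that $g$ is convex so $\Omega_R$ is again an unbounded concave toric domain, and that the slope transformation $s \mapsto s/(1+s)$ correctly sends $-1/i$ to $-1/(i-1)$. Convexity of $g$ follows from the chain rule, since $s/(1+s)$ is increasing in $s > -1$, $f'$ is increasing in $u$ by convexity of $f$, and $u$ is increasing in $u'$ on the right sub-region (as $du/du' = 1/(1+f'(u)) > 0$ there). A secondary issue is the edge case $f'(0) > -1$, where the slope-$(-1)$ tangent is not attained and the first step of the algorithm is modified; here a parallel argument for the modified decomposition applies, or one may reduce to the generic case by an approximation argument.
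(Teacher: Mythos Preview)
Your proposal is correct and takes essentially the same approach as the paper: both arguments produce $i$ weights all at least $f(x)$ by following the right branch of the weight expansion, which yields for each $n=1,\dots,i$ a triangle whose slant edge is tangent to the graph at the point of slope $-1/n$, with weight at least the $y$-coordinate there. The paper states this in one sentence by appeal to the structure of the weight expansion, whereas you unpack it via explicit induction and the affine map $(u,v)\mapsto(u+v-a_1,v)$; your slope computation $s\mapsto s/(1+s)$ is exactly what justifies the paper's claim that the $n$-th right-branch triangle touches the graph at slope $-1/n$.
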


\begin{proof} It suffices to find $i$ weights $a_j$ all satisfying $a_j \ge f(x)$.  It follows from the definition of the weight expansion that for each positive integer $n$, there is a weight for the associated toric domain corresponding to an affine triangle whose slant edge meets the graph of $f$ at the point $(x,y)$ such that $f'(x) = -1/n;$ moreover, it follows from the definition of the weight expansion that this weight is at least as large as $y$. The result follows.
\proofend

\end{proof}

\subsection{Estimating the subleading asymptotics in terms of the weights}

We can now prove the first part of Proposition \ref{upperprop}; we also a prove a useful upper bound for the subleading asymptotics in terms of the weights.
Throughout $C>0$ will denote a, possibly varying, positive constant, which is independent of $k$.

\begin{proposition}
\label{prop:main}
If $\sum a_i$ is unbounded, then
\begin{equation}
\label{eqn:growth}
\lim_{k \to \infty} e_k(\mathrm{int}(X))  = - \infty.
\end{equation}
Moreover if $a_i \ge  C \left(\frac{1}{i} \right)^{\frac{p}{p+1}}$, then  
\begin{equation}
\label{eqn:infsub}
e_k(\mathrm{int}(X))    \le - Ck^{\frac{1}{2p}}.
\end{equation}
\end{proposition}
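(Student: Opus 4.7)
The plan is to combine Lemma \ref{lem:key} with the Ball and Disjoint Union Axioms to express $c_k(\mathrm{int}(X))$ as an integer optimization problem, and then apply Lagrangian weak duality with a well-chosen dual parameter. The axioms together with Lemma \ref{lem:key} yield
\[c_k(\mathrm{int}(X)) = \sup_{\{k_i\}:\,\sum k_i = k} \sum_i a_i\, n_{k_i},\]
where $n_k$ is the $(k+1)^{st}$ smallest entry of $(m+n)_{m,n \ge 0}$. Since $n_k = d \Leftrightarrow d(d+1)/2 \le k < (d+1)(d+2)/2$, setting $d_i := n_{k_i}$ shows that every feasible choice satisfies $\sum_i d_i(d_i+1)/2 \le k$, reducing the problem to the integer knapsack $\sup \sum_i a_i d_i$ subject to $\sum_i d_i(d_i+1)/2 \le k$, $d_i \in \mathbb{Z}_{\ge 0}$.

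For any $\lambda > 0$, weak duality yields
\[\sum_i a_i d_i \;\le\; \lambda k + \sum_i \max_{d \in \mathbb{Z}_{\ge 0}}\bigl[a_i d - \lambda d(d+1)/2\bigr].\]
Each pointwise maximum is a downward parabola in $d$, which vanishes for $a_i < \lambda/2$ and is bounded by $a_i^2/(2\lambda) - a_i/2 + \lambda/8$ otherwise. Summing and using $\sum_i a_i^2 = 2\vol(X)$ (which follows from the Volume Axiom together with Lemma \ref{lem:key}), the balancing choice $\lambda_k := \sqrt{\vol(X)/k}$ makes $\lambda_k k + \vol(X)/\lambda_k = 2\sqrt{\vol(X)\,k}$ and produces
\[e_k(\mathrm{int}(X)) \;\le\; -\tfrac12 \sum_{i \in I^*} a_i + \tfrac{\lambda_k |I^*|}{8}, \qquad I^* := \{i: a_i \ge \lambda_k/2\}.\]
The threshold defining $I^*$ gives $\lambda_k|I^*|/8 \le \tfrac14 \sum_{I^*} a_i$, so the error term is absorbed and
\[e_k(\mathrm{int}(X)) \;\le\; -\tfrac14 \sum_{i \in I^*(\lambda_k)} a_i.\]

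Both parts of the proposition then follow readily. If $\sum_i a_i = \infty$, then as $k \to \infty$ (so $\lambda_k \to 0$) the tail sum $\sum_{a_i \ge \lambda_k/2} a_i$ diverges, giving \eqref{eqn:growth}. Under the hypothesis $a_i \ge C\,i^{-p/(p+1)}$ of \eqref{eqn:infsub}, $I^*(\lambda_k)$ contains all $i \le (2C/\lambda_k)^{(p+1)/p}$, a set of size on the order of $k^{(p+1)/(2p)}$; an elementary integral estimate for $\sum_{i \le N} i^{-p/(p+1)}$ then forces $\sum_{I^*} a_i \gtrsim k^{1/(2p)}$ for $k$ large, establishing \eqref{eqn:infsub}.

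The principal technical point is that the integer-constraint error $\lambda_k|I^*|/8$ really is dominated by the principal subleading term $\tfrac12 \sum_{I^*} a_i$. That this holds automatically for the natural balancing choice $\lambda_k = \sqrt{\vol(X)/k}$, precisely because the threshold defining $I^*$ forces $a_i \ge \lambda_k/2$ on that set, is the one nontrivial verification; the rest is routine computation.
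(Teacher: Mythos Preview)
Your argument is correct and takes a genuinely different route from the paper's.  The paper analyzes the actual maximizer $d(k)$ of the same integer knapsack: it applies Cauchy--Schwarz to get $e_k \le -\sum_i a_i(\sqrt{d(k)_i^2+d(k)_i}-d(k)_i) \le -(\sqrt{2}-1)\sum_{i<I(k)} a_i$, where $I(k)$ is the first index with $d(k)_i=0$, and then must prove $I(k)\to\infty$ (and the sharper $I(k)\gtrsim k^{(p+1)/(2p)}$) via a combinatorial swap that decreases $d(k)_j$ by one and spreads it as $1$'s over indices $I(k),\ldots,I(k)+d(k)_j-1$.  You bypass this entirely: Lagrangian weak duality with $\lambda_k=\sqrt{\vol(X)/k}$ directly produces a threshold set $I^*=\{i:a_i\ge\lambda_k/2\}$ that is explicit in the weights, and the same threshold inequality that defines $I^*$ kills the integer-rounding error.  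Your approach is shorter and avoids any structural analysis of the maximizer; the paper's approach, by contrast, yields structural information about $d(k)$ (e.g.\ the bound \eqref{eqn:bound2}) that could be useful elsewhere but is not needed for this proposition.  Both arrive at essentially the same final estimate $e_k \le -c\sum_{i\in J_k} a_i$ for a set $J_k$ of size $\sim k^{(p+1)/(2p)}$.
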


\begin{proof}  By Lemma~\ref{lem:key}, it suffices to prove the same result for $c_k( \sqcup_i B(a_i))$.

Let $d(k) = \lbrace d(k)_i \rbrace_{i=1,\ldots}$ be a maximizer over the nonnegative integers $d_i$, $i \ge 1$, of the optimization problem
\begin{equation}
\label{eqn:opt}
\left \lbrace \sum_i d_i a_i \hspace{1 mm} | \hspace{1 mm} \sum_i (d_i^2 + d_i) \le 2k \right \rbrace,
\end{equation}
noting that such a maximum does exist and can be assumed to have the properties that $d(k)_j$ is nonincreasing in $j$, $d(k)_j = 0$ for $j > k$, and
\begin{equation}
\label{eqn:max}
\sum_i d(k)_i^2 + d(k)_i = 2k.
\end{equation}

The reason we are interested in the $d(k)_i$ is that by the Disjoint Union \cite[Prop. 1.5]{h1} and Ball \cite[Cor. 1.3]{h1} axioms reviewed above, we have that
\begin{equation}
\label{eqn:opt}
\sum_i a_i d(k)_i = c_k(M);
\end{equation}
see also \cite[Sec. 1.1]{h2} for further discussion of the infinite case.

Now
\[ \sum_i ( d(k)^2_i + d(k)_i) = 2k.\]
Applying Cauchy-Schwarz we obtain
\[ \sum_i a_i \sqrt{d(k)^2_i + d(k)_i} \le \sqrt{ 2 vol} \sqrt{2k}\]
where $\mathrm{vol} := \mathrm{vol}(X) = \frac{1}{2} \sum a_i^2$.

By invoking \eqref{eqn:opt} and Lemma \ref{lem:key}, we can rewrite the above inequality as
\[ e_k(\mathrm{int}(X)) = c_k( \sqcup B(a_i) ) - 2 \sqrt{k \mathrm{vol}} \le - \sum_i a_i \left( \sqrt{ d(k)^2_i + d(k)_i} - d(k)_i \right).\]

Let $I(k)$ be the minimum $i$ such that $d(k)_i = 0$.   The function $n \to \sqrt{ n^2 + n}  - n$ is a nonnegative, strictly increasing function on the domain $\mathbb{Z}_{\ge 0}$, 
so by above, 
\[ e_k(\mathrm{int}(X)) \le - (\sqrt{2}-1) \sum^{I(k)-1}_i a_i. \]
Thus, by Lemma \ref{lem:bound}, to show \eqref{eqn:growth}, it suffices to show that 
\begin{equation}
\label{eqn:needed}
\lim_{k \to \infty} I(k) = +\infty.
\end{equation}
As for \eqref{eqn:infsub}, if we assume that 
$a_i \ge C \left(\frac{1}{i} \right)^{\frac{p}{p+1}}$,
then we have
\[ - (\sqrt{2}-1) \sum^{I(k)-1}_i a_i \le -C \int^{I(k)}_1 \frac{1}{x^{p/(p+1)}} dx\]
\[ = - C (I(k)^{\frac{1}{p+1}} -1),\]
so that it suffices to prove that 
\begin{equation}
\label{eqn:needed2}
I(k) \ge C k^{\frac{p+1}{2p}}.
\end{equation}

We find the needed lower bounds on $I(k)$ as follows.  Let $j < I(k)$.  Then we claim that
\begin{equation}
\label{eqn:bound1}
-a_j + a_{I(k)} + \ldots + a_{I(k)+d(k)_j - 1} \le 0.
\end{equation} 
Indeed, if we consider a sequence of integers $d'(k)_i$ (indexed by $i$) that is identical to $d(k)_i$  except that 
\[ d'(k)_j = d(k)_j - 1, \quad \quad d'(k)_{I(k)} = \ldots = d'(k)_{I(k) + d(k)_j - 1} = 1,\] 
then the $d'$ satisfy $\sum_i (d(k)_i'^2 + d(k)_i') = \sum_i (d_i(k)^2 + d(k)_i)$, and the claimed inequality follows immediately from the fact that the $d(k)_i$ are maximizers of the optimization problem \eqref{eqn:opt}.  

Now \eqref{eqn:bound1}, applied to $j = 1$, implies \eqref{eqn:needed}.  Otherwise, there is a subsequence with $I(k)$ is uniformly bounded, and necessarily on this subsequence $d(k)_1$ is unbounded. But then the divergence of $\sum a_i$ gives a contradiction (as $a_1$ is finite).

Thus, it remains to prove \eqref{eqn:needed} under the assumption that $a_i \ge C \left(\frac{1}{i} \right)^{\frac{p}{p+1}}$, and this will be our standing assumption for the rest of the proof.

By \eqref{eqn:bound1} and the fact that the $d(k_j)$ are nonincreasing, we have that $-a_j + d(k)_j a_{I + d(k)_j - 1} \le 0$, and so
\begin{equation}
\label{eqn:bound2}
d(k)_j \le \frac{a_j}{a_{I(k)+d(k)_j - 1}} \le \frac{a_j}{a_{I(k) + \lceil \sqrt{2k} \rceil}},
\end{equation}
where in the last inequality we have used the fact that $d(k)_j \le \sqrt{2k} + 1$. 

Hence, in view of the inequality above, the identity \eqref{eqn:max}, and the definition of $I(k)$, we have:
\[ 2k = \sum^{I(k)}_{j=1} d(k)^2_j + d(k)_j \le \sum^{I(k)}_{j=1} 2 d(k)^2_j \le 2 \sum^{I(k)}_{j=1} \frac{a^2_j}{ a^2_{I(k) + \lceil \sqrt{2k} \rceil} } \le \frac{4 vol}{  a^2_{I(k) + \lceil \sqrt{2k} \rceil}}.\]
It follows that
\[ a^2_{I(k) + \lceil \sqrt{2k} \rceil} \le \frac{4 \mathrm{vol}}{2k}\]
hence
\[ ( I(k) + \sqrt{2k} + 1)^{2p/(p+1)} \ge C k.\]

The inequality \eqref{eqn:needed2} follows from this since $1/2 < (p+1)/(2p)$.

\proofend
\end{proof}

\subsection{Subleading asymptotics and decay rates}

In this section we obtain tight bounds for the subleading asymptotics of concave domains in terms of their decay rates, and state precise bounds in the case of the $X_p$, see Proposition \ref{ekfordecay2}. These bounds are an application of the lower bounds coming from section \ref{lowerbounds} and the upper bounds from section \ref{upperbounds}.

Recall we suppose $f$ is convex with decay rate $1 < p \le 2$, that is, $$\lim_{x \to \infty} \frac{\ln f(x)}{\ln x} = -p.$$

We will prove the following.

\begin{proposition}\label{ekfordecay} For all $\epsilon>0$ sufficiently small there exists a constants $C_1$ and $C_2$ such that $$-C_1 k^{1/2p + \epsilon} < e_k(X_f) < -C_2 k^{1/2p - \epsilon}.$$
\end{proposition}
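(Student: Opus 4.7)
The proof splits into two parts, corresponding to the upper and lower bounds on $e_k(X_f)$.

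For the upper bound $e_k(X_f) \le -C_2 k^{1/(2p)-\epsilon}$, I would first derive a lower bound on the weights $a_i$ of $X_f$ from the decay rate of $f$, and then invoke Proposition \ref{prop:main}. Fix a small $\delta > 0$. The decay rate hypothesis gives $x^{-p-\delta} \le f(x) \le x^{-p+\delta}$ for all sufficiently large $x$. For each large $i$, pick $x_i$ at which $-1/i$ lies in the subgradient of $f$, so $f'(x_i) = -1/i$ in the sense appropriate for convex functions. Applying convexity to the pair $(x_i, x_i/2)$ yields
\[ f(x_i/2) - f(x_i) \ge f'(x_i)(x_i/2 - x_i) = \frac{x_i}{2i}, \]
and combining this with $f(x_i/2) \le (x_i/2)^{-p+\delta}$ gives $x_i \le C i^{1/(p+1-\delta)}$. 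Lemma \ref{weightestimate} then yields
\[ a_i \ge f(x_i) \ge x_i^{-p-\delta} \ge C\, i^{-(p+\delta)/(p+1-\delta)} \]
for all sufficiently large $i$. Setting $p^* = (p+\delta)/(1-2\delta)$, one checks $p^*/(p^*+1) = (p+\delta)/(p+1-\delta)$, so we are in the setting of Proposition \ref{prop:main} with $p$ replaced by $p^*$. That proposition gives $e_k(X_f) \le -C k^{1/(2p^*)}$, and since $p^* \to p$ as $\delta \to 0$, choosing $\delta$ small enough makes $1/(2p^*) > 1/(2p) - \epsilon$, completing the bound.

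For the lower bound $e_k(X_f) > -C_1 k^{1/(2p)+\epsilon}$, the plan is to apply Theorem \ref{bdrygrowth} to a relatively compact symplectomorphic copy of $int(X_f)$. Such a copy $Z$ is produced by the bounded identification to be constructed in Section \ref{sec:bound}. The upper bound $f(x) \le x^{-p+\delta}$ yields $\int_N^\infty f(r)\,dr = O(N^{-(p-1-\delta)})$, so the volume of $X_f \setminus \{\pi|z_1|^2 \le N\}$ is $O(N^{-(p-1-\delta)})$. Tracking this through the symplectomorphism, the volume of points in $Z$ within distance $d$ of $\partial Z$ is of order at most $d^q$ with $q = 2 - 2/p - O(\delta)$. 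Theorem \ref{bdrygrowth} then yields $e_k(X_f) > -C k^{(2-q)/4} = -C k^{1/(2p) + O(\delta)}$, and taking $\delta$ small enough gives the desired bound.

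The principal obstacle is the lower bound: verifying that the toric volume decay at infinity, controlled by the decay of $\int_N^\infty f(r)\,dr$, translates correctly into volume decay near the boundary of the bounded symplectomorphic copy $Z$. This requires the explicit form of the symplectomorphism from Section \ref{sec:bound} and careful book-keeping of how Euclidean distance in $Z$ to $\partial Z$ is related to the toric coordinate $\pi|z_1|^2$ near infinity. By contrast, the upper bound is essentially a direct combination of Lemma \ref{weightestimate} and Proposition \ref{prop:main} once the decay rate is used to control the weights from below.
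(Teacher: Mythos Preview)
Your upper bound argument is essentially the paper's approach: both combine Lemma~\ref{weightestimate} with Proposition~\ref{prop:main} after extracting a lower bound on the weights from the decay rate. Your convexity estimate $f(x_i/2)-f(x_i)\ge x_i/(2i)$ is in fact a slightly cleaner route to the bound $x_i \le C\,i^{1/(p+1-\delta)}$ than the paper's Lemma~\ref{derivest}, which argues by contradiction over the interval $[n^{1/(p+1)},\,n^{1/(p+1)+\epsilon}]$; the two are equivalent in substance.

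For the lower bound, however, your route through the bounded symplectomorphic copy $Z$ from Section~\ref{sec:bound} is an unnecessary detour, and the gap you flag is real. The paper instead applies Proposition~\ref{hest} \emph{directly} to the unbounded domain $X_f\subset\RR^4$: that proposition requires only an open domain in $\RR^4$ with $V_d(X)<Cd^q$, not relative compactness. The required volume estimate is Lemma~\ref{volest2}, which bounds $V_d(X_f)$ by working in the toric fibration picture and integrating $2\sqrt{\pi}\,d\sqrt{f}$ (the annular part of each fiber) plus the tail $\int_{r_\infty}^\infty f$; both contributions are $O(d^{2-2/p-\epsilon})$. This avoids entirely the problem of tracking Euclidean distances through the folding maps of Section~\ref{sec:bound}. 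Note too that Proposition~\ref{prop:bound}(2) only asserts the Minkowski dimension of the bounded copy for the specific domains $X_p$, not for general $X_f$ with decay rate $p$, so your proposed route would require additional work beyond what the paper provides.
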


First we estimate the volume growth. 

\begin{lemma}\label{volest2}  For all $\epsilon >0$ there exists a constant $C$ such that $V_d(X_f) < Cd^{2 - 2/p - \epsilon}$.
\end{lemma}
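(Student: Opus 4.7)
The plan is to use polar coordinates on $\CC^2$ and the $T^2$-symmetry of $X_f$ to reduce the tube estimate to the plane. Setting $z_j = r_j e^{i\theta_j}$ and noting that the closest boundary point can be taken with the same arguments $\theta_j$, the distance in $\CC^2$ from $(z_1, z_2) \in X_f$ to $\partial X_f$ equals the planar Euclidean distance from $(r_1, r_2)$ to the curve $\gamma: r_2 = F(r_1)$, where $F(r_1) := \sqrt{f(\pi r_1^2)/\pi}$. Consequently
\[
V_d(X_f) = (2\pi)^2 \iint_{N_d} r_1 r_2 \, dr_1 \, dr_2,
\]
where $N_d \subset \{r_2 \le F(r_1)\}$ is the planar tube of radius $d$ around $\gamma$.

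Because $F$ is strictly decreasing, a short argument inspecting the coordinates of the nearest point of $\gamma$ shows that $N_d$ is contained in the union of ``case V'' $\{F(r_1) - r_2 < d\}$ and ``case H'' $\{F^{-1}(r_2) - r_1 < d\}$. In each case the inner integral is explicit, and I would split the outer variable at $R_1(d) := F^{-1}(d)$ for case V (respectively at $F(d)$ for case H) according to whether the slab has width $d$ or the full width $F(r_1)$ (resp.\ $F^{-1}(r_2)$). The two ``bulk'' pieces are bounded by multiples of $d\int_0^\infty r_1 F\, dr_1 = d \cdot \Vol(X_f)/(2\pi)$ and $d\int_0^\infty F^2\, dr_1$, both finite (the latter because $p > 1$), so contribute $O(d)$. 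The tail of case H evaluates via integration by parts to $O(d^2)$. This leaves the crucial tail of case V,
\[
\frac{(2\pi)^2}{2} \int_{R_1(d)}^\infty r_1 F(r_1)^2 \, dr_1,
\]
where the decay rate enters.

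For any $\delta > 0$, the hypothesis $\ln f(x)/\ln x \to -p$ yields $c_\delta\, r_1^{-p - \delta} \le F(r_1) \le C_\delta\, r_1^{-p + \delta}$ for $r_1$ large. The upper bound bounds the case V tail integral above by $C\, R_1(d)^{2 - 2p + 2\delta}$, while the lower bound forces $R_1(d) \ge c'_\delta\, d^{-1/(p + \delta)}$; combining, the tail is at most a constant times $d^{(2p - 2 - 2\delta)/(p + \delta)}$, whose exponent tends to $2 - 2/p$ as $\delta \to 0$. A direct computation (using $p > 1/2$) shows that this exponent exceeds $2 - 2/p - \epsilon$ once $\delta$ is chosen small enough in terms of $\epsilon$. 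Since $1 < p < 2$ gives $2 - 2/p < 1$, this tail dominates the $O(d)$ bulk terms as $d \to 0$ and yields the claimed bound $V_d(X_f) < C_\epsilon\, d^{2 - 2/p - \epsilon}$. The main obstacle is the bookkeeping of the two parameters -- $\delta$ absorbing the decay-rate error and $\epsilon$ appearing in the final exponent -- but the underlying geometric input reduces to the planar two-case decomposition of $N_d$ together with the integrability of $r F$ and $F^2$.
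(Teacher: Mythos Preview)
Your reduction to the planar tube and the tail-V estimate are sound, but there is a genuine gap in the bulk-V step. You bound the bulk of case~V by $d\int_0^\infty r_1 F(r_1)\,dr_1$ and assert this equals $d\cdot\Vol(X_f)/(2\pi)$. Neither claim is correct: the volume formula involves $F^2$, namely $\Vol(X_f)=2\pi^2\int_0^\infty r_1 F(r_1)^2\,dr_1$, and in fact
\[
\int_0^\infty r_1 F(r_1)\,dr_1 \,=\, \frac{1}{2\pi^{3/2}}\int_0^\infty \sqrt{f(u)}\,du
\]
\emph{diverges} precisely in the regime $1<p\le 2$ under consideration, since $\sqrt{f(u)}\sim u^{-p/2}$ with $p/2\le 1$. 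So the bulk of case~V is not $O(d)$. The repair is to keep the finite upper limit $R_1(d)$ and estimate $d\int_0^{R_1(d)} r_1 F\,dr_1$ directly via $F\le C_\delta r_1^{-p+\delta}$ together with an \emph{upper} bound $R_1(d)\le C d^{-1/(p-\delta)}$ (coming from the same upper bound on $F$, not the lower bound you used for the tail); one then finds this term is also of order $d^{\,2-2/p-\epsilon}$, matching the tail. This is exactly the computation the paper performs, in the moment-map variable $r=\pi r_1^2$ where the bulk integrand becomes $2\sqrt{\pi}\,d\sqrt{f(r)}-\pi d^2$.

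There is also a smaller gap in the decomposition step: the inclusion $N_d\subset\{F(r_1)-r_2<d\}\cup\{F^{-1}(r_2)-r_1<d\}$ fails as stated. For $F$ locally affine of slope $-1$, a point at Euclidean distance just under $d$ from the graph has both vertical and horizontal distance to $\gamma$ just over $d$. What the nearest-point argument does give (since the foot $(s_1,s_2)$ satisfies $s_1\ge r_1$, $s_2\ge r_2$, both differences $<d$) is $F(r_1)-r_2<(1+M)d$ with $M=\sup|F'|$; the paper uses exactly this, writing $N_d\subset W_{Cd}$. With this constant absorbed, case~H becomes superfluous and, once the bulk-V estimate is corrected as above, the remainder of your argument goes through.
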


\begin{proof} 
Given $\epsilon>0$, for $x$ sufficiently large we have $f(x) < x^{-p+ \epsilon}$ and so there is a constant with $f(x) < C(1+x)^{-p+ \epsilon}$ for all $x$.

We think of $X_f$ as a fibration over the $z$ plane with fibers over $z$ being disks of area $f(\pi |z|^2) < C(1 + \pi|z|^2)^{-p + \epsilon}$.

Define $N_d$ as before to be the points within $d$ of the boundary and $W_d \subset N_d \subset X_p$ to be the points within $d$ of the boundary of the corresponding fiber. There is a constant $C$ (depending on the slope of $f$) such that $N_d \subset W_{Cd}$, and hence it suffices to bound the volume of the $W_d$.

The complement of $W_d$ is another toric domain, now corresponding to the region under the graph of
$$g(r) = \pi \max(0, \sqrt{ f(r) / \pi} - d)^2.$$
Let $f(r_{\infty}) = \pi d^2$, so $r_{\infty} < C d^{-2/( p - \epsilon)}$ where we again use $C$ to denote a constant, in this case independent of $d$.

Therefore we have
\[ \mathrm{vol}(W_d) = \int_0^{r_{\infty}} (f(r) - \pi ( \sqrt{f(r) / \pi} - d)^2 ) \, dr + \int_{r_{\infty}}^{\infty} f(r) \, dr \]
\[ = \int_0^{r_{\infty}} (2\sqrt{\pi}d \sqrt{f} - \pi d^2) \, dr  + \int_{r_{\infty}}^{\infty} f(r) \, dr \]
\[ <   Cd (1+x)^{1-(p-\epsilon)/2} \big|_0^{r_{\infty}}  - C (1+x)^{1-p+\epsilon} \big|_{r_{\infty}}^{\infty} < C d^{2 - 2/p - \epsilon} \]
since $p \le 2$, as required. (We note that the same proof applies when $p>2$ but then the integral of $\sqrt{f}$ converges and we obtain growth of order $d$.)
\proofend
\end{proof}

\begin{lemma}\label{derivest} Let $\epsilon >0$. Then for $n$ sufficiently large we have $|f'( n^{1/(p+1) + \epsilon})| \le 1/n$.
\end{lemma}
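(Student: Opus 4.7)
The plan is to combine convexity of $f$ with its decay rate to obtain a pointwise upper bound on $|f'(x)|$, and then to substitute $x = n^{1/(p+1)+\epsilon}$ while controlling the error with a small auxiliary parameter. The key preliminary observation is that for any convex, decreasing, positive function $f$, the convexity inequality $f(x/2) \ge f(x) + f'(x)(x/2 - x)$ immediately yields
$$|f'(x)| \;\le\; \frac{2\bigl(f(x/2) - f(x)\bigr)}{x} \;\le\; \frac{2\,f(x/2)}{x},$$
valid at every point where $f$ is differentiable, and elsewhere valid with any subgradient (equivalently, either one-sided derivative) in place of $f'(x)$.

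Next I would invoke the decay-rate hypothesis: for every $\delta > 0$ there exists $X_0$ such that $f(y) \le y^{-p+\delta}$ whenever $y \ge X_0$. Combining this with the previous inequality gives, for $x$ sufficiently large,
$$|f'(x)| \;\le\; \frac{2\,(x/2)^{-p+\delta}}{x} \;\le\; C_\delta\, x^{-1-p+\delta}.$$

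Finally, substituting $x = n^{1/(p+1)+\epsilon}$ transforms this into
$$|f'(x)| \;\le\; C_\delta\, n^{\left(\tfrac{1}{p+1}+\epsilon\right)(-1-p+\delta)} \;=\; C_\delta\, n^{-1 - (p+1)\epsilon + \delta\left(\tfrac{1}{p+1}+\epsilon\right)}.$$
It suffices to choose $\delta > 0$ small enough that $(p+1)\epsilon - \delta(\tfrac{1}{p+1}+\epsilon) > 0$; for instance any $\delta < (p+1)^2 \epsilon / (1 + (p+1)\epsilon)$ works. Then the exponent on $n$ is strictly less than $-1$, so the whole expression is at most $1/n$ once $n$ is large.

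The only real obstacle is administrative rather than mathematical: one must ensure the convexity estimate is valid even when $f$ is not differentiable, which is handled by working throughout with one-sided derivatives (which exist everywhere by convexity) and noting that the basic convexity inequality holds with either of them in place of $f'$.
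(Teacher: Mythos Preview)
Your proof is correct and takes a genuinely different route from the paper.

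The paper argues by contradiction: assuming $|f'(n^{1/(p+1)+\epsilon})| > 1/n$, convexity forces $|f'| > 1/n$ on the whole interval $[n^{1/(p+1)}, n^{1/(p+1)+\epsilon}]$, so the drop $\Delta f$ over that interval is at least $n^{-p/(p+1)}(n^{\epsilon}-1)$; this is then compared against an upper bound on $\Delta f$ coming from the two-sided decay estimate $x^{-p-\epsilon} < f(x) < x^{-p+\epsilon}$ at the endpoints, yielding a contradiction for large~$n$. Your approach instead extracts a pointwise bound $|f'(x)| \le 2f(x/2)/x$ directly from the supporting-line inequality for convex functions, and then substitutes. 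This is more streamlined: it uses only the upper half of the decay hypothesis, avoids the contradiction structure, and gives an explicit inequality $|f'(x)| \le C_\delta x^{-1-p+\delta}$ that could be reused elsewhere. The paper's argument, on the other hand, makes the role of the interval $[n^{1/(p+1)}, n^{1/(p+1)+\epsilon}]$ more visible, which ties in with how the lemma is applied together with Lemma~\ref{weightestimate}. Both routes use convexity in essentially the same way (monotonicity of $f'$); yours just packages it more efficiently.
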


\begin{proof} We argue by contradiction, and assume $|f'(x)| > 1/n$ on the interval $[n^{1/(p+1)}, n^{1/(p+1) + \epsilon} ]$.

Then the change $\Delta f$  in $f$ over the interval is bounded by
\begin{equation}
\label{eqn:change}
\Delta f > \frac{1}{n} (n^{1/(p+1) + \epsilon} - n^{1/(p+1)}) = n^{\frac{-p}{p+1}}( n^{\epsilon} -1).
\end{equation}

On the other hand, the decay rate of $f$ implies that for large $x$ we have
$$x^{-p- \epsilon} < f(x) < x^{-p + \epsilon}$$
\begin{equation}
\label{eqn:change2}
\Delta f < n^{1/(p+1)(-p + \epsilon)} - n^{(1/(p+1) + \epsilon)(-p - \epsilon)} \\
= n^{\frac{-p}{p+1}}( n^{\epsilon / (p+1)} - n^{ -\epsilon( p + 1/(p+1) + \epsilon)}).
\end{equation}

This gives a contradiction since when $n$ is large we have $$n^{\epsilon / (p+1)} - n^{ -\epsilon( p + 1/(p+1) + \epsilon)} < n^{\epsilon} -1.$$

\proofend
\end{proof}

We can now prove Proposition \ref{ekfordecay}.

\begin{proof}
First, Lemma \ref{volest2}  together with Proposition \ref{hest} implies the lower bound on $e_k(X_f)$. Second, Lemma \ref{derivest} together with Lemma \ref{weightestimate} implies that the $n$th term of the weight expansion for $X_f$ is at least $$f(n^{1/(p+1) + \epsilon}) \ge n^{(1/(p+1) + \epsilon)(-p-\epsilon)} = n^{-\frac{p}{p+1} - \epsilon(p + \frac{1}{p+1}) - \epsilon^2}.$$
Then, scaling $\epsilon$ appropriately, Proposition \ref{prop:main} gives the result.

\proofend
\end{proof}

In the case of the domains $X_p$ the defining function is equal to $(1+x)^{-p}$. We can follow the same argument to estimate the subleading asymptotics but now without any order $\epsilon$ error terms. This gives the following stronger result, which was previously stated as Proposition~\ref{prop:sub}; we recall the statement:

\begin{proposition} \label{ekfordecay2} There exist positive constants $C_1$ and $C_2$ so that $-C_1 k^{1/2p} < e_k(X_p) < -C_2 k^{1/2p}$.
\end{proposition}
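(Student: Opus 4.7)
The plan is to run the proof of Proposition \ref{ekfordecay} essentially verbatim, but to sharpen Lemma \ref{volest2} and Lemma \ref{derivest} by taking advantage of the explicit form $f(x) = (1+x)^{-p}$ so that no $\epsilon$ slack is needed. Concretely, the lower bound will come from Proposition \ref{hest} applied to a sharp volume decay estimate, and the upper bound will come from inequality \eqref{eqn:infsub} in Proposition \ref{prop:main} applied to a sharp weight estimate.

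For the lower bound $e_k(X_p) > -C_1 k^{1/(2p)}$, I would first show directly that $V_d(X_p) \le C d^{\,2 - 2/p}$ with no $\epsilon$. Solving $f(r_\infty) = \pi d^2$ explicitly gives $r_\infty = (\pi d^2)^{-1/p} - 1$, which is of order $d^{-2/p}$ as $d \to 0$. Plugging into the two integrals that appear in the proof of Lemma \ref{volest2},
\[
V_d(X_p) \;\le\; C\,d \int_0^{r_\infty} (1+r)^{-p/2}\,dr \;+\; C \int_{r_\infty}^{\infty} (1+r)^{-p}\,dr,
\]
and evaluating each explicitly (using $1 < p \le 2$, so the first integral is the dominant one), both pieces are bounded by a constant times $d^{\,2-2/p}$. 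Then Proposition \ref{hest} with $q = 2 - 2/p$ gives $e_k(X_p) \ge -C_1 k^{(2-q)/4} = -C_1 k^{1/(2p)}$.

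For the upper bound $e_k(X_p) < -C_2 k^{1/(2p)}$, I would compute the derivative exactly: $f'(x) = -p(1+x)^{-p-1}$, so $|f'(x)| = 1/n$ precisely when $1 + x = (pn)^{1/(p+1)}$. Lemma \ref{weightestimate} then gives
\[
a_n \;\ge\; f\bigl((pn)^{1/(p+1)} - 1\bigr) \;=\; (pn)^{-p/(p+1)} \;\ge\; C\,n^{-p/(p+1)},
\]
which is exactly the hypothesis of inequality \eqref{eqn:infsub} in Proposition \ref{prop:main}. Applying that inequality yields $e_k(X_p) \le -C_2 k^{1/(2p)}$, as required.

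I don't expect any significant obstacle here: the proof of Proposition \ref{ekfordecay} already does all the conceptual work, and the only role of the $\epsilon$ in that proof was to handle the asymptotic (as opposed to exact) knowledge of $f$ and $f'$. Once $f(x) = (1+x)^{-p}$ is explicit, the volume integral and the critical point of $f'$ are computed in closed form, so the $\epsilon$ can be set to zero everywhere and one reads off the sharp exponents $1/(2p)$ on both sides.
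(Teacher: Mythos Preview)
Your proposal is correct and follows exactly the approach the paper indicates: the paper's proof of Proposition \ref{ekfordecay2} is simply the instruction to rerun the argument of Proposition \ref{ekfordecay} with the explicit function $f(x)=(1+x)^{-p}$, eliminating the $\epsilon$ slack in Lemmas \ref{volest2} and \ref{derivest}, and your proposal carries this out precisely (sharp volume decay $V_d \le C d^{2-2/p}$ feeding into Proposition \ref{hest} for the lower bound, and the exact solution of $f'(x)=-1/n$ feeding into Lemma \ref{weightestimate} and \eqref{eqn:infsub} for the upper bound).
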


\section{A quantified theorem and proofs of the main results}\label{mainproofs}

\subsection{The main theorems}

We can now prove our main results.  We begin with the failure of packing stability for all unbounded concave toric domains.

\vspace{1 mm}

\begin{proof}{\em (Proof of Theorem~\ref{thm:main})}

Let $n \ge 1$ and $D_n$ denote the disjoint union of $n$ equal open balls, with total volume $\mathrm{vol} = \mathrm{vol}(\text{int}(X))$.  

By \cite[Lem. 2.4]{mcd}, we have
\begin{equation}
\label{eqn1}
 c_k (D_n) = c_k ( \lambda E(1,n ) ),
 \end{equation}
for $\lambda = \sqrt{\frac{ 2 \mathrm{vol} } { n} }$.
Moreover, by \cite[Prop. 15]{cgs},
\begin{equation}
\label{eqn2}
e_k(E(1,n)) > - \infty.
\end{equation}

Hence, by \eqref{eqn1} together with \eqref{eqn2} and Proposition~\ref{upperprop},
there exist $k$ such that
\[  c_k(D_n) = c_k ( \lambda E(1,n ) ) > c_k(\text{int}(X)).\]

As the ECH capacities of $D_n$ vary continuously under rescaling the balls, we also get a strict inequality for ECH capacities of sufficiently large compact subsets of $D_n$. Hence by the Monotonicity property of ECH capacities, \cite[Prop. 4.11]{h1}, we see that such compact subsets of $D_n$ do not embed in the interior of $X$.
\proofend

\end{proof}

\begin{remark}(Ellipsoid embedding functions)
\normalfont
A series of works, see e.g. \cite{mcdsch, cv, cgetal, special, cgfs, cgk, chls, siegel1, siegel2} and the references therein, study the ``ellipsoid embedding function" $c_X(a)$ of a symplectic manifold; in the case where $(X,\omega)$ is a $4$-manifold, this is defined to be the the supremum, over $c$, such that the symplectic ellipsoid $c \cdot E(1,a)$ can be symplectically embedded into $(X,\omega)$.  In this literature, one often distinguishes in the finite volume case between the ``flexible case", where the only embedding obstruction is the volume, and the ``rigid case", where there are further obstructions.  In all examples studied prior to our work, it has been the case that flexibility holds whenever $a$ is sufficiently large.  The argument for Theorem~\ref{thm:main} above implies that flexibility for the ellipsoid embedding function never holds for any $a$ whenever $(X,\omega)$ is an unbounded concave toric domain; alternatively, this follows directly from Theorem~\ref{thm:main}, since it is known that packing stability by balls holds for any four-dimensional symplectic ellipsoid \cite{busehind13}.

\end{remark}

Our other two main results, Theorem~\ref{main2} and Theorem~\ref{main3},
are 
corollaries of the following more general theorem, which is of potentially independent interest.  The theorem is a kind of quantified version of our obstruction to packing stability, in terms of the Minkowski dimension.  To state the theorem, say that a symplectic manifold $(Z,\omega)$ {\em fully packs} $(M,\omega')$ if the generalized packing number $p_{Z,1}(M,\omega') = 1.$

\begin{theorem}
\label{thm:general}
Let $f$ have decay rate $1 < p < 2$ and let $(Y,\omega)$ be a either a relatively compact and open subset of a symplectic manifold, or an open subset of $\RR^4$. 
If $(Y,\omega)$ fully packs $X_f$, then the inner Minkowski dimension of $Y$ must be at least $2 + 2/p$.
\end{theorem}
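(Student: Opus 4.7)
The plan is to play the upper bound on the subleading asymptotics of $X_f$ from Proposition~\ref{ekfordecay} against the lower bound on the subleading asymptotics of $Y$ given by the fractal Weyl law in Theorem~\ref{bdrygrowth}. The full packing hypothesis links these two estimates via monotonicity of ECH capacities, and the resulting chain of inequalities reveals that the decay rate hypothesis forces the Minkowski dimension of $\partial Y$ to be large.

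First I would extract a comparison of subleading asymptotics from the full packing assumption. The hypothesis says that for each $\varepsilon > 0$ there is a positive real $c(\varepsilon)$ and a symplectic embedding $(Y, c(\varepsilon)\omega) \hookrightarrow X_f$ with $c(\varepsilon)^2 \mathrm{vol}(Y,\omega) \geq \mathrm{vol}(X_f) - \varepsilon$, so that $c(\varepsilon) \to c_0 := \sqrt{\mathrm{vol}(X_f)/\mathrm{vol}(Y,\omega)}$ as $\varepsilon \to 0$. Monotonicity of ECH capacities combined with the linear scaling of $c_k$ under rescaling of the symplectic form gives $c_k(X_f) \geq c(\varepsilon) \cdot c_k(Y,\omega)$ for every $k$. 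Subtracting $2\sqrt{\mathrm{vol}(X_f)\,k}$ from both sides, re-expressing each side via the definition of $e_k$, and letting $\varepsilon \to 0$ with $k$ held fixed yields
\[ e_k(X_f) \geq c_0 \cdot e_k(Y,\omega) \quad \text{for every } k. \]

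Next I would feed in the two asymptotic estimates. Suppose for contradiction that the inner Minkowski dimension of $\partial Y$ is strictly less than $2 + 2/p$. Then the order $q$ of the volume decay near $\partial Y$ satisfies $q > 2 - 2/p$, and Theorem~\ref{bdrygrowth} furnishes a constant $C > 0$ with $e_k(Y,\omega) > -C k^{(2-q)/4}$. On the other hand, Proposition~\ref{ekfordecay} gives, for every small $\eta > 0$, a constant $C_2 > 0$ with $e_k(X_f) < -C_2 k^{1/(2p) - \eta}$. Chaining these estimates with the comparison from the first step yields
\[ C_2 k^{1/(2p) - \eta} < -e_k(X_f) \leq c_0(-e_k(Y,\omega)) < c_0 C k^{(2-q)/4}. \]
Since $(2-q)/4 < 1/(2p)$, choosing $\eta$ small enough makes the left-hand exponent strictly greater than the right-hand one, contradicting the displayed inequality for large $k$.

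The main obstacle is justifying the comparison $c_k(X_f) \geq c(\varepsilon)\,c_k(Y,\omega)$ in the required generality, since $Y$ is merely open and not assumed to be a Liouville domain, and the leading Weyl volume asymptotic $c_k \sim 2\sqrt{\mathrm{vol}\cdot k}$ must hold for both $Y$ and $X_f$ in order for the $e_k$-analysis to be meaningful. Both assertions reduce to the exhaustion of $Y$ and $X_f$ by Liouville subdomains, which is exactly how $c_k$ is defined on open sets, combined with the volume axiom from \cite{cghr}; for $X_f$ one can alternatively invoke Lemma~\ref{lem:key}. In the case where $Y$ is a non-relatively-compact open subset of $\RR^4$, one first replaces $X_f$ by its bounded symplectomorphic model from Section~\ref{sec:bound}, which does not alter any ECH capacity, thereby placing the setup within the scope of Theorem~\ref{bdrygrowth}.
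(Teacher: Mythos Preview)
Your argument is essentially the paper's proof: compare the upper bound on $e_k(X_f)$ from Proposition~\ref{ekfordecay} with the lower bound on $e_k(Y)$ from the fractal Weyl law, and use monotonicity under the near-full embeddings to derive a contradiction of exponents when $q > 2 - 2/p$. The paper phrases this slightly differently, working directly with the critical scaling $(Y, c\omega)$ of equal volume and then invoking continuity of ECH capacities in $c$ to rule out nearby scalings, but your limiting argument $e_k(X_f) \ge c_0\, e_k(Y,\omega)$ is an equivalent formulation.

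There is one genuine confusion in your final paragraph. When $Y$ is an open but not relatively compact subset of $\RR^4$, you propose to replace $X_f$ by its bounded model from Section~\ref{sec:bound} in order to ``place the setup within the scope of Theorem~\ref{bdrygrowth}''. This does not help: Theorem~\ref{bdrygrowth} is being applied to $Y$, not to $X_f$, and changing the target has no effect on whether $Y$ is relatively compact. Nor can you replace $Y$ by its image under an embedding into the bounded model, since the inner Minkowski dimension is not a symplectic invariant and the conclusion of the theorem concerns the dimension of $\partial Y$ as it sits in $\RR^4$. The correct route, which the paper takes, is simply to invoke Proposition~\ref{hest} in the Euclidean case: that proposition is stated for an arbitrary open domain $X \subset \RR^4$ with $V_d(X) < Cd^q$, with no boundedness hypothesis, so it gives the lower bound $e_k(Y) \ge -Ck^{(2-q)/4}$ directly.
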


We recall from section \ref{mink} that the inner Minkowski dimension is well defined in these situations.

Before proving Theorem~\ref{thm:general}, we explain why it implies the remaining main results.

\vspace{1 mm}

\begin{proof}{\em (Proof of Theorem~\ref{main2}, assuming Theorem~\ref{thm:general})}
Let $(Z,\omega_1)$ and $k$ be as in the statement of that theorem and let $Y_k$ be the disjoint union of $k$ copies of $(Z,\omega_1)$.  Then
\begin{equation}
\label{eqn:packingnumberk}
 p_{Y_k,1}(X_f) = p_{Z,k}(X_f).
 \end{equation}
On the other hand, the inner Minkowski dimension of $(Z,\omega_1)$ is $3$: in the case where $Z$ is compact with smooth boundary, this follows from Lemma~\ref{lem:smoothcase}, and in the case where $Z$ is a convex domain this follows from Lemma~\ref{lem:convexcase}.  Thus the inner Minkowski dimension of $Y_k$ is $3$, and so by Theorem~\ref{thm:general} and \eqref{eqn:packingnumberk} 
\[ p_{Z,k}(X_f) < 1\] 
hence Theorem~\ref{main2}.  
\proofend
\end{proof}

\vspace{1 mm}

\begin{proof}{\em (Proof of Theorem~\ref{main3}, assuming Theorem~\ref{thm:general})}

Assume that $int(X_f)$ is symplectomorphic to $int(Z)$.  Then, $int(Z)$ fully packs $int(X_f)$.  Thus by Theorem~\ref{thm:general} the inner Minkowski dimension of $Z$ is at least $2 + 2/p$.
\proofend
\end{proof}

\vspace{2 mm}

We now give the promised proof of our quantified variant of the failure of packing stability.

\begin{proof}{\em (Proof of Theorem~\ref{thm:general})}

Let $q$ be the order of the volume decay near $\partial Y$, so that the inner Minkowski dimension of $Y$ is $4 - q$, and let $c$ be such that $(Y, c \omega)$ has the same volume as $X_f$.
Then by Theorem~\ref{bdrygrowth}, or by Proposition \ref{hest} in the Euclidean case,
\begin{equation}
\label{eqn:bound1} 
e_k(Y, c \omega) > - C k^{(2-q)/4}.
\end{equation}
On the other hand, by Proposition~\ref{upperprop} 
\begin{equation}
\label{eqn:bound2}
e_k(X_p) < -C_1 k^{1/2p - \epsilon},
\end{equation}
with the constant $C_1$ depending on $\epsilon > 0$.  Thus if 
\[ 4 - q < 2 + 2/p\]
then for $\epsilon$ sufficiently small 
\[ -C_1 k ^{1/2p - \epsilon} <  - C k^{(2-q)/4} \]
for $k$ sufficiently large, hence, in combination with \eqref{eqn:bound1} and \eqref{eqn:bound2}, 
\[ c_k(Y, c \omega)  > c_k(X_p)    \] 
for  some $k$.
Thus, by the Scaling and Monotonicity properties,  $(Y, c' \omega)$ does not embed into $X_p$ for any scaling factor $c'$ such that the  volume is sufficiently close to the volume of $X_p$, and so the theorem is proved. 

\end{proof}

\subsection{Some further applications around Theorem~\ref{thm:general}}

Theorem~\ref{thm:general} is more general than our main theorems, so we now give some examples of some further applications beyond our main results.

\begin{example} (Infinite packings) 
\normalfont
Theorem~\ref{thm:general} does not require the domain to have even finitely many components.  For example, we can study infinite ball packings.  
The $X_f$, being unbounded concave toric domains, have a canonical ball-packing; however, we can obstruct many other ball-packings.  For infinite ball packings, the inner Minkowski dimension is easy to compute, and in general we find that for balls $\sqcup_{i \ge 1} B(a_i)$ to fully fill an $int(X_f)$ with decay rate $1 < p < 2$, the partial sums $\sum^k_{i=1} a_i$ must grow sufficiently fast. To give just one example, let
$Y = \sqcup_{i \ge 1} B(i^{-q})$ with $1/2 < q < 2/3$.  One can compute that this has inner Minkowski dimension $2/q$ and so the theorem implies the disjoint union fully fills an $X_f$ with decay rate $p$ only if $q < \frac{p}{p+1}$.  
\end{example}

\begin{example} (Unbounded concave domains with different decay rates)
\normalfont
Let $X_{f_1}$ and $X_{f_2}$ be unbounded concave toric domains, such that the decay rate of $f_1$ is $p$, the decay rate of $f_2$ is $q$, and $1 < p < q < 2.$  Then, it follows from 
that $int(X_{f_2})$ does not fully fill $int(X_{f_1})$, since
 $int(X_{f_2})$ has 
 inner Minkowski dimension $2 + 2/q$, and this is smaller than $2 + 2/p$; alternatively, we can apply Proposition \ref{ekfordecay}.   It would be interesting to know if it must be the case that $int(X_{f_1})$ can fully fill $int(X_{f_2})$
\end{example}

\section{Boundedness.}
\label{sec:bound}

In this section, we address Proposition \ref{prop:minkowski}. More generally, we show that a large class of toric domains $X_{\Omega}$ (including concave toric domains) of finite volume are symplectomorphic to the interior of compact subsets of $\CC^2$. At least in the case of the $X_p$ we show the boundary of these compact sets have the same inner Minkowski dimension as the $X_p \subset \CC^2$ themselves.

We use coordinates $z,w$ on $\CC^2 \equiv \R^4$ and write $z = x+iy, w = u + iv$. The round disk of area $R$ is denoted by $D(R)$.  

\begin{proposition}
\label{prop:bound} Suppose $f$ is a nonincreasing function with $f(0)=1$ and $V = \int_0^{\infty} f(r) \, dr < \infty$. Let $\Omega = \{s \le f(r) \}$ and $X = X_{\Omega} = \{(z,w) | (\pi|z|^2, \pi|w|^2) \in \Omega\}$.

For any $\delta >0$ there exists a symplectic embedding $$F: X \hookrightarrow D(2+ \delta) \times D(V+2).$$
Moreover:
\begin{enumerate}
\item $F(X)$ is the interior of the compact set $\overline{F(X)}$.
\item In the case when $X = X_p$, the volume decay rate of 
$F(X)$ is equal to that of $X$.
\end{enumerate}

\end{proposition}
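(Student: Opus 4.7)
The plan is to construct $F$ by cutting $X$ into annular slabs and stacking them inside the target polydisc. For each $n \ge 0$, set $X_n = \{(z,w) \in X : n < \pi|z|^2 < n+1\}$; the radial shift $z \mapsto z_n$ defined by $\pi|z_n|^2 = \pi|z|^2 - n$ is symplectic in the $z$-plane, and extending trivially in $w$ sends $X_n$ onto a slab $\tilde X_n$ whose moment image is $\{0 < s_1 < 1,\ 0 \le s_2 \le f(s_1+n)\}$, contained in $D(1) \times D(f(n))$.

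Since $f$ is nonincreasing with $f(0)=1$, the Riemann sum bound gives $\sum_n f(n) \le f(0) + \int_0^\infty f = 1 + V$, so I can choose pairwise disjoint open sets $D_n \subset D(V+2)$ with $\mathrm{area}(D_n) = f(n) + \eta_n$ and $\sum \eta_n < 1$, together with a symplectic embedding $D(f(n)) \hookrightarrow D_n$ for each $n$, available in dimension two by Moser's theorem. Composing, each $\tilde X_n$ embeds symplectically into $D(1) \times D_n$, and these images are automatically disjoint because their $w$-coordinates lie in the distinct $D_n$.

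The main obstacle is that the naive union of these slab-wise embeddings is not smooth across the circles $\pi|z|^2 \in \mathbb{Z}_{>0}$, where the slab with $z_n$ near the outer edge of $D(1)$ meets the slab with $z_{n+1}$ near the center. To glue these into a single smooth symplectic embedding I would apply a symplectic folding procedure (in the spirit of Lalonde--McDuff, cf.\ Schlenk's text): in a thin neighborhood of each transition, deform via a Hamiltonian isotopy that uses an additional annular sheet in the $z$-direction to smoothly carry the outer edge of the $n$-th slab to the inner edge of the $(n+1)$-st. The extra $z$-sheet is what forces the factor of $2$ in $D(2+\delta)$, and $\delta$ absorbs the standard overhead of the folding construction. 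The resulting image $F(X)$ is a relatively compact open subset of $D(2+\delta) \times D(V+2)$ which, by construction, is the interior of its closure, giving claim~(1).

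For claim~(2), with $X = X_p$, I would compute the inner Minkowski dimension of $\partial \overline{F(X_p)}$ by choosing the arrangement of the $D_n$ carefully. The smooth portions of the boundary (the images of the graphs $\{s_2 = f(s_1+n)\}$ together with the folding transition surfaces) contribute volume decay of order $d$, and hence dimension at most $3$. To match the dimension $2+2/p$ required when $p<2$, I would arrange the centers of the $D_n$ so that they accumulate on a subset of $D(V+2)$ of Minkowski dimension $2/p$; a direct calculation, mirroring the volume estimate in Lemma~\ref{volest2}, shows that the tubular $d$-neighborhood of this accumulation locus then has two-dimensional measure of order $d^{\,2-2/p}$, and multiplying by the area of $D(1)$ gives total volume decay of order $d^{\,2-2/p}$. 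This yields inner Minkowski dimension $2+2/p$, while for $p \ge 2$ the smooth boundary dominates and one obtains dimension $3$, giving $\max(2+2/p, 3)$ as claimed.
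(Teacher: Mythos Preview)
Your cut-and-stack strategy is natural, but the gluing step is where the real content lies and you have not done it. After the radial shifts, the outer edge of the $n$-th slab (where $\pi|z_n|^2 \to 1$) must be joined smoothly to the inner edge of the $(n+1)$-st slab (where $\pi|z_{n+1}|^2 \to 0$), while simultaneously the $w$-fiber is transported from $D_n$ to $D_{n+1}$ inside $D(V+2)$. Invoking ``symplectic folding'' as a black box handles a single transition, but you have infinitely many, and the overhead of each fold depends on the displacement needed to carry the fiber from $D_n$ to $D_{n+1}$. Without specifying how the $D_n$ are arranged and why the transition paths are pairwise disjoint, you cannot bound this overhead uniformly, nor explain why the total cost in the $z$-factor is $2+\delta$ rather than something depending on the geometry of your packing. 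The paper sidesteps all of this with a continuous construction: it first maps $X$ to a fibration over the infinite strip $(0,\infty)\times(0,1)$ with square fibers $S(f(x))$; then a single globally defined Hamiltonian $H(z,w)=\chi(x)(v+\tfrac12)$, with $\chi'(x)=\sqrt{f(x-1)}$, shears the fibers in the $u$-direction so that fibers over $x$-values differing by at least $1$ become disjoint; finally both factors are wrapped in polar coordinates. No gluing is needed, and injectivity after wrapping holds because overlapping $z$-points have integer $x$-difference and hence disjoint $w$-fibers. The bound $2+\delta$ in the first factor drops out because the shear moves the $y$-coordinate by at most $\chi'(x)\le 1$, and the bound $V+2$ in the second because the $w$-projection has area at most $1+\int\sqrt{f}\,\chi' \le 2+V$.

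Your Minkowski dimension argument for claim~(2) also has a gap. You assert that the smooth boundary pieces contribute decay of order $d$, but there are infinitely many such pieces (one graph per slab, plus all the fold transitions), and summing infinitely many $O(d)$ contributions need not be $O(d)$ without uniform control of the constants. Moreover, the proposition asserts the decay rate of $F(X_p)$ \emph{equals} that of $X_p$ for the specific $F$ constructed, not that one can engineer the accumulation set of the $D_n$ to have a prescribed dimension; your freedom to choose centers is beside the point. In the paper this step is clean: because $\Psi$ and $\Xi$ have uniformly bounded derivatives, the inner Minkowski dimension of $F(X)$ equals that of the strip model $\{0<y<1,\ (u,v)\in S(f(x))\}$, for which the volume within $d$ of the boundary is computed directly and shown to be of order $d^{\,2-2/p}$.
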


\begin{remark} 
\normalfont
We note there exist toric domains $X \subset \CC^2$ of finite volume but which do not admit symplectic embeddings into any bounded set. For example we can take $X = X_{\Omega}$ where $\Omega$ is an open neighborhood of the diagonal $\{r=s\}$. The first examples of this nature were due to Hermann, \cite{hermann}.
\end{remark}

\begin{proof} The proof adapts  the symplectic folding construction to a form more sensitive to the Minkowski dimension.
The construction of the embedding has 3 steps. In each step we define a symplectic embedding, $\Phi$, $\Psi$ and $\Xi$ respectively, and their images $X_1 = \Phi(X)$, $X_2 = \Psi(X_1)$ and our bounded domain $X_3 = \Xi(X_2)$. The construction will show that $\Psi$ and $\Xi$ have bounded derivatives (unlike $\Phi$). Hence to estimate the inner Minkowski dimension it suffices to look at $X_1$, and so we describe this set in detail.

\vspace{0.1in}

{\em Step 1.  Preparation. The embedding $\Phi$.}  

Our goal here is to map our manifold $X$, which is a fibration over the $z$ plane $\CC$ with fibers $D(f(\pi|z|^2))$, to a fibration over a strip $T$.

We start with the region $$T = (0, \infty) \times (0,1)$$ in the first quadrant of the $z$ plane.
Let $\phi_1$ be a symplectic embedding of the $z$ plane into itself with image $T$.
We can choose the embedding such that all disks $D(R)$ map into a neighborhood of the region $\{ x < R \} \cap T$.
Here we note that the regions $\{ x < R  \} \cap T$ have area $R$ and so such an embedding is possible by \cite{schl}, Lemma 3.1.5. We arrange that when $S>R$ the image $\phi_1(D(S) \setminus D(R))$ intersects $\{ x < R  \} \cap T$ only in a very small neighborhood of its boundary.

Let $\phi_2$ be a symplectomorphism of the $w$ plane mapping the disks $D(R)$ to approximations of squares $S(R)$, of side length $\sqrt{R}$, centered at the origin. Again we can construct such a symplectomorphism using \cite{schl}, Lemma 3.1.5.

Now we define $\Phi = \phi_1 \times \phi_2$. We think of the image $X_1 = \Phi(X)$ as fibered over $T$. We claim that the set is an approximation (in a sense we describe momentarily) of the set $\tilde{X_1}$ which is the fibration over $T$ with fibers $S(f(x))$.

The set $\tilde{X_1}$ then is a manifold with corners, and $X_1$ itself will have smooth boundary, smoothly approximating the smooth parts of $\partial \tilde{X_1}$ and rounding the corners.

To see the approximation, suppose $x = R$ for some $R>0$ and $(x,y) \in \phi_1(\CC)$ is away from the boundary . Then $(x,y)$ is disjoint from $\phi_1(D(S))$ when $S$ is slightly smaller than $R$, but if $(x,y)$ lies away from $\partial \phi_1(\CC)$ then it also avoids the images of points with $\pi |z|^2$ greater than $R$. It follows that $(x,y) = \phi_1(z)$ where $\pi |z|^2$ is approximately $R$, and so the fiber must approximate $\phi_2(D(f(R)))$ or $S(f(R))$.
We can choose this approximation as close as necessary.

Points $(x,y)$ with $x = R $ and close to $\partial \phi_1(\CC)$ lie in the image of a point with $\pi|z|^2 >R$, and we can arrange $\pi|z|^2$ to increase rapidly, and hence the size of the fiber to decrease rapidly, as we approach the boundary.

\vspace{0.1in}

{\em Step 2.  Displacing fibers. The embedding $\Psi$.}

Our goal is to describe a domain $X_2 = \Psi(X_1)$ with the property that the $w$ fibers over points $(x_1, y_1)$, $(x_2, y_2)$ in the $z$ plane with $|x_1 - x_2| \ge 1$ are disjoint. We will do this by translating the squares $S(f(x))$ in the $u$ direction, as we describe now.

\vspace{0.1in}

{\em Step 2a.  Moving the squares.}
Let $$G(w) = v + 1/2.$$
Then as $f(x) \le 1$ we have $0 \le G \le 1$ on $X_1$ and the Hamiltonian vector field $X_G = \frac{ \partial}{\partial u}.$ The flow is time independent and displaces a square $S(R)$ in time $\sqrt{R}$.

\vspace{0.1in}

{\em Step 2b.  Displacing the fibers.}
Fix a  function $\chi : [0, \infty) \to \RR$ such that $\chi(x) = x$ when $x \le 1$ and $\chi'(x)= \sqrt{ f(x-1)}$ when $x \ge 1$. We recall our normalization $f(0)=1$.

Then we consider the Hamiltonian diffeomorphism $\Psi$ generated by $$H(z,w) = \chi(x) G(w).$$

To study the effect of $\Psi$ we apply the product rule to write
$$X_{H}(x, y, w) = \chi(x)X_{G}(w) + G(w) \chi'(x)\frac{ \partial}{\partial y}.$$
We observe that $\Psi$ preserves the $x$ coordinate, and as $G$ is an integral of the motion the $\frac{ \partial}{\partial y}$ component is independent of time. This implies that $\Psi$ can be written as a composition $\Psi^z \circ \Psi^w$, where $\Psi^z$ fixes the $w$ coordinate and translates in the $y$ direction by $G(w) \chi'(x)$, while $\Psi^w$ fixes the $z$ coordinate and flows along $X_G$ for time $ \chi(x)$.

We have
\begin{equation}\label{zmove}
\Phi^z(x,y,u,v) = (x, y + (v+ 1/2)\chi'(x), u, v).
\end{equation}
In particular, as $0 \le G = v+ 1/2  \le 1$ on $X_1$, we see that the translation $\Psi^z$ is nonnegative and bounded by $\chi'(x) \le 1 + \delta$. Hence the projection of $X_2= \Psi(X_1)$ to the $z$ plane now lies in a strip
$$T_2 = (0, \infty) \times (0,2 + \delta).$$

The diffeomorphism 
\begin{equation}\label{wmove}
\Psi^w (x,y,u,v) = (x,y, u + \chi(x), v).
\end{equation}
Like $X_1$ the image $X^w_2 = \Psi^w(X_1)$ also fibers over $T$, but now the fiber over a point $(x,y)$ is the square $S(f(x)) + \chi(x)e_u$ where $e_u = (0,0,1,0)$ is the unit vector in the $u$ direction.
Our condition on $\chi'$ implies that $\chi(x) - \chi(x-1) > \sqrt{ f(x-1)}$, which is the width of the fiber over points $(x-1,y)$, and hence the fibers of $X^w_2$ over points $(x_1,y_1)$ are now disjoint from those over points $(x_2, y_2)$ with $|x_1 - x_2| \ge 1$. These fibers remain disjoint if we apply $\Psi^z$ and hence we also have that the fibers of $X_2 = \Psi(X_1)$ over points $(x_1,y_1)$ and $(x_2, y_2)$ with $|x_1 - x_2| \ge 1$ are disjoint. 
It is clear that both $\Psi^z_k$ and $\Psi^w_k$ have uniformly bounded derivatives.

In preparation for the next step, we compute the area of the projection $S$ of $X_2^w$ (or equivalently $X_2$) onto the $w$ plane. For this, if $(u,v)$ is the projection of $\Psi^w(x,y,u',v)$ then, as $(u', v) \in S(f(1)) = S(1)$ and hence $u' \in [-1/2, 1/2]$, formula \ref{wmove} gives $\chi(x) \in [u-1/2, u+1/2]$. Therefore, if $u \ge 1/2$, the $v$ coordinate lies in an interval of length at most $\sqrt{ f( \chi^{-1}(u-1/2))}$. When $u \in [-1/2, 1/2]$ we still have $v$ in an interval of length $1$. Hence the area of $S$ is bounded by
\begin{equation}
\label{eqn:bound3}
\begin{split}
1 + \int_{1/2} ^{\infty} \sqrt{ f( \chi^{-1}(u-\frac{1}{2}))} \, du  & = 1 + \int_0^{\infty} \sqrt{f(y)} \chi'(y) \, dy \\
& < 2 + \int_1^{\infty} f(y-1) \, dy \le V+2.
\end{split}
\end{equation}
where we use the identity $\chi'(y) = \sqrt{ f(y-1) }$ when $y \ge 1$ and recall $f(y) \le 1$ for all $y$.

\vspace{0.1in}

{\em Step 3.  Wrapping.}   To complete the description of our embedding let $\xi_1$ be  an  immersion of the region $T_2 = (0, \infty) \times (0, 2+\delta)$ in the $z$-plane with image contained in $D(2+2\delta) \setminus \{0\}$ and having the property that $\mathrm{arg} \xi_1(z)$ depends only upon $x$, that is, vertical line segments map to rays. We arrange that points $(x_1,y_1)$ and $(x_2, y_2)$ map to the same ray if and only if $x_1 - x_2 \in \ZZ$. Concretely, we can choose polar coordinates $R = \pi |z|^2$, $\theta \in \RR / \ZZ$ on the plane, and set $\xi_1(x,y) = (R, \theta)$ where $R = y + \delta$ and $\theta = -x$.

Let $\xi_2$ be a symplectomorphism of the $w$ plane mapping the region $S$ into the disk $D(V+2)$. This is possible by the calculation at the end of Step 2, and for the purposes of the current paper our concern is just that the disk has finite area. The map can be chosen to wind the infinite strip $S$ continuously around the origin, mapping points with $u$ large close to the boundary of the disk.

To be concrete, we recall that $S$ lies in the region $$\left[\frac{-1}{2}, \frac{1}{2} \right] \times \left[\frac{-1}{2}, \frac{1}{2} \right] \bigcup \{ u \ge 1/2, \, |v|<  \sqrt{ f( \chi^{-1}(u-1/2))}/2 \}.$$ Let $\sigma : [-1/2, \infty) \to \RR$ satisfy $\sigma(u) = u + 1/2 + \delta$ when $u \le 1/2$ and $\sigma'(u) = \sqrt{ f( \chi^{-1}(u-1/2))}$ when $u \ge 1/2$. Then using polar coordinates as before we can define $\xi_2(u,v) = (R, \theta)$ with $R = \sigma(u) + v$, $\theta = -u$. This gives a symplectic embedding of $S$ into a disk of area $$\lim_{u \to \infty} \sigma(u) = \delta + 1 + \int_{1/2}^{\infty} \sigma'(u) \, du < V+2$$ by the calculation at the end of Step 2.

We see that both $\xi_1$ and $\xi_2$  have bounded derivatives. We define $\Xi  = \xi_1 \times \xi_2$ and set $X_3 = \Xi(X_2)$.

{\em Step 4. Injectivity.}  We note that any double points $(x,y)$, $(x', y')$ of $\xi_1$ necessarily have $x-x'  \in \ZZ \setminus \{ 0 \}$, and hence the fibers of $X_2$ over such points are disjoint.  Therefore the map $\Xi$ is an embedding of $X_2$ and $F = \Xi \circ \Psi \circ \Phi$ gives an embedding of $X$, with image lying inside $D(2 + 2\delta) \times D(V+2)$. In particular $X$ is symplectomorphic to a bounded domain.

{\em Step 5.  Interior of a compact set.}  Having described our embedding $F$, and proved part of the proposition, we now prove the rest.  For the second statement in Proposition \ref{prop:bound} we study the bounded open set $Y = X_3 = F(X)$ and claim that $Y$ is the interior of $\overline{Y}$. To see this, we need to show that points in $\partial Y$ do not lie in the interior.

In fact we can describe $\partial Y$ explicitly: it is a union of two sets $Z_1 \cup Z_2$ we define now.

The embedding $\Xi \circ \Psi$ extends to an embedding of an open neighborhood of $X_1 \subset \CC^2$, and we set $Z_1 = \Xi \circ \Psi(\partial X_1)$. We recall $X_1$ is unbounded but arranged to have smooth boundary. As $\Xi \circ \Psi$ extends smoothly across $\partial X_1$, points in $Z_1$ are also in the boundary of $\overline{Y}$.

The remainder $Z_2$ of $\partial Y$ consists of accumulation points $(z_{\infty},w_{\infty})$ of sequences $\Xi \circ \Psi(z_i, w_i)$, where $(z_i, w_i) \in X_1$ and $\mathrm{Re}z_i \to \infty$. By construction, such points have $\pi |w_{\infty}|^2 = \lim_{u \to \infty} \sigma(u)$ where $\sigma(u)$ is the map described in Step 3. This limit is a finite number $l < V+2$, and as any neighborhood of $(z_{\infty},w_{\infty})$ contains points with $\pi |w|^2 > l$, we see that $Z_2$ is also contained in the boundary of $\overline{Y}$ as required. 

\vspace{3 mm}

{\bf Remark.} As an aside we note (at least in the case when the integral of $\sqrt{f}$ diverges, that is, when $\chi(x)$ is unbounded) that the set $Z_2$ is actually precisely the product $$Z_2 = \{ \delta \le \pi |z|^2 \le 1 + \delta\} \times \{ \pi |w|^2 = l\}.$$ Indeed, when $x = \mathrm{Re}z$ is large, $\chi'(x)$ is small and the map $\Psi^z$ from formula \ref{zmove} is small. Hence if $x$ is large the $w$ fiber of $X_2 = \Psi(X_1)$ over a point $(x,y)$ is a square $S(f(x)) + \chi(x)e_u$ when $0 < y < 1$ and empty otherwise (up to small perturbations which decrease with $x$). In particular, applying $\Xi$ to $X_2 \cap \{x > K\}$ when $K$ is large, the $z$ projection of the image approximates the image of the map $\xi_1$ applied to $\{x > K, \, 0<y<1\}$, which is the annulus $ \{ \delta < \pi |z|^2 < 1 + \delta\}$. Thus $Z_2$ lies in the set described.

Conversely, fix $0 \le r <1$, $0<y<1$ and consider the sequence of points $z_n = n+ r + iy$ for $n \in \mathbb{N}$. Under the map $\xi_1$ from Step 3, all $z_n$ are mapped to $\xi_1(z_n) = \xi_1(r + iy) = (R= y + \delta, \theta = -r)$.  Acting on the fiber of $X_2$ over $z_n$, which is a square centered at $(\chi(n+r),0)$, the argument of $\xi_2$ forms an interval around $\theta = -\chi(n+r)$. As $\chi$ is unbounded but has derivative decreasing to $0$, these points form a dense set of arguments as $n$ increases, showing that all points in the circle $\{ \pi |w|^2 = l\}$ appear as accumulation points of the fiber of $Y$ over $\xi_1(r + iy)$.

\vspace{3 mm}

{\em Step 6.  Minkowski dimension.}  To conclude the proof of Proposition \ref{prop:bound} we carry out some estimates in the case when $X = X_p$, that is, when $f(x) = (1+x)^{-p}$. We focus on the case when $1<p<2$, though the same argument works when $p \ge 2$.  

As the maps $\Psi$ and $\Xi$ have uniformly bounded derivatives, the Minkowski dimension of $F(X)$ will be equal to that of $X_1 = \Phi (X)$. The nature of our approximation implies that this is also the Minkowski dimension of $\tilde{X_1}$.

Let $N_d$ be the points within $d$ of the boundary of $\tilde{X_1}$. This contains all square fibers of side length less than $2d$, that is, the fibers over points with $x > x_{\infty}$ where $\sqrt{ f(x_{\infty})} = 2d$ or $x_{\infty} = Cd^{-2/p}$ (throughout $C$ denotes a constant independent of $d$). The volume of the total space of these fibers is then
$$ \int_{x_{\infty}}^{\infty} f(x) \, dx  = C d^{2 - 2/p}.$$

Next we consider the fibers over points with $x < x_{\infty}$. The set $N_d$ is now contained in a union of four subsets: 
\begin{enumerate}
\item fibers over points with $x < d$; 
\item fibers over points with $y<d$; 
\item fibers over points with $y>1-d$; 
\item points $(x,y,w)$ with $w \in S(f(x)) \setminus S(g(x))$, where $S(g(x))$ is a square of side length $\sqrt{f(x)} - 2d$.
\end{enumerate}

Set (1) has volume bounded by $df(0) = d$. Sets (2) and (3) have volume bounded by $$d \int_0^{x_{\infty}} f(x) \, dx < Cd.$$ Finally set (4) has volume bounded by
 $$\int_0^{x_{\infty}} (f(x) - g(x)) \, dx = \int_0^{x_{\infty}} (2d \sqrt{f(x)} - 4d^2) < C d^{2 - 2/p}$$ using $g(x) = (\sqrt{f(x)} - 2d)^2$.

Putting the estimates together gives our Minkowski dimension as required.

\proofend

\end{proof}

\section{Discussion}
\label{sec:disc}

Given our Theorem~\ref{thm:main}, and the context surveyed in the introduction, it is natural to now revisit the question of which finite volume symplectic manifolds have packing stability.  
In view of the positive results for closed manifolds surveyed in the introduction, the natural guess would be that all closed manifolds do.
On the other hand, as we have shown here, packing stability can fail for non-compact manifolds even though it sometimes holds in this class.  One would like to get a better handle on the interface between these phenomena: a natural question is what properties of a non-compact manifold guarantee that packing stability holds.

In the cases presented here, the boundary behavior certainly appears to be playing a key role.
Whether the singular character of the boundary in our examples is necessary for packing stability to fail seems a natural starting point for further study.   Here is a sample question:

\begin{question}
Must packing stability hold for every compact Liouville domain with smooth boundary? 
\end{question}

All known non-compact examples for which packing stability holds have quite a lot more structure than this.  In this regard, let us also note that our obstruction here comes from the divergence of the subleading asymptotics of ECH capacities for our domains.   However, various conjectures imply that in sufficiently nice cases these should converge; see e.g. \cite[Conj. 1.5]{h2}.  

In a different direction, one would like to know if analogues of our results holds in higher dimensions.  The perspective conveyed in Remark \ref{rmk:point} certainly seems like it should hold in arbitrary dimension, but the obstructions we use here come from ECH capacities, which do not currently have a good generalization in higher dimensions.
Thus the following two basic questions remain open:

\begin{question}
Can packing stability fail in dimension $ > 4$?
\end{question}

\begin{question}
Are there ``boundaryless" discs in $\mathbb{R}^{2n}$ when $n > 2$?  More precisely, do there exist bounded and open subsets of $\mathbb{R}^{2n}$, diffeomorphic to discs, that can not be symplectomorphic to the interior of any compact symplectic manifold?
\end{question}

It would also be interesting if one can improve on the condition $p < 2$ in Theorem~\ref{main2}, for example with respect to packings by smooth domains: this would follow immediately from our results if one had better lower bounds on the growth rate of the subleading asymptotics of ECH capacities of smooth compact manifolds, which is an interesting problem of independent interest.

\end{document}